\theoremstyle{plain}
\newtheorem{mainthm}{Theorem}
\newtheorem*{conj*}{Conjecture}
\newtheorem*{cor*}{Corollary}
\newtheorem{theorem}{Theorem}[section]
\newtheorem{proposition}[theorem]{Proposition}
\newtheorem{lemma}[theorem]{Lemma}
\theoremstyle{definition}
\newtheorem*{def*}{Definition}
\newtheorem{remark}[theorem]{Remark}
\newtheorem{rmk}[theorem]{Remark}
\newtheorem{example}[theorem]{Example}
\newtheorem{definition}[theorem]{Definition}
\renewcommand{\epsilon}{\varepsilon}
\newcommand{\R}{\mathbb{R}}
\newcommand{\eps}{\varepsilon}
\title{Entropy of Singular Suspensions.}
\author{Elias Rego and Sergio Roma\~{n}a}
 \subjclass[2020]{Primary: 37C10, Secondary: 37B40.}
\keywords{Topological Entropy, Singular Flows, Suspension Flows, Minimal Sets.}
        \thanks{The first was partially supported by  NSFC 12250710130 from China. The second author was supported by  “Bolsa Jovem Cientista do Nosso Estado No. E-26/201.432/2022, Brazil, NNSFC 12071202, and NNSFC 12161141002 from China.}
\date{}
\begin{document}
\maketitle

\begin{abstract}
In this work, we investigate diffeomorphisms whose positiveness of topological entropy is destroyed by singular suspensions. We show that this phenomenon is rare in the set of  $C^1$-diffeomorphisms. Precisely, we prove that for an open and dense set of $C^1$-diffeomorphism positive topological entropy is preserved by singular suspensions, even for suspensions with infinitely many singularities. We prove a similar result to the conservative diffeomorphisms.  We apply our techniques to show that every expansive singular suspension $C^{1}$-flow over a three-dimensional manifold has positive topological entropy. Finally, we explore this phenomenon for Anosov dynamics, showing  that to nullify the topological entropy for Anosov suspension flow, the set of singularities must capture the non-wandering set.
\end{abstract}

\section{Introduction}\label{Intro}

The foundation of modern dynamical systems theory can be traced back to the pioneering works of Henri Poincaré on the qualitative theory of differential equations (\cite{Po}). Over time, this theory has blossomed into a rich field within mathematics. From a modern perspective, a dynamical system is defined as a triple $(G,M,\phi)$, where $G$ is a group,  $M$ is set endowed with some structure and $\phi$ is an action of $G$ on $M$ which agrees with the structure of $M$. When we replace $G$ by $\mathbb{Z}$ or $\mathbb{R}$, we obtain the classical discrete-time and continuous-time systems, respectively. 

If $M$ is a smooth closed manifold and  $\phi$ is a continuous-time system of class $C^1$, we are on the classical setting of flows, which is deeply related to the Poincar\'{e}'s work where it is evidenced a deep connection between the discrete-time and the continuous-time systems: The poincar\'{e} maps. Indeed, the poincar\'{e} maps form a tool to discretize time in flow. This discovery led the community to study the dynamics of diffeomorphisms to obtain results that could be used to understand the dynamical behavior of flows. Nevertheless, discrete-time systems have proven to be a proper theory with several results exclusive to the diffeomorphism scenario.  So a very natural question arose:

\textit{\textbf{Question 1:} Suppose that some result holds for diffeomorphisms. Is it true that the same or an analogous result holds for flows?   }

\vspace{0.1in}
In the converse direction of the Poincar\'{e} maps, there is a natural way of obtaining a flow from a diffeomorphism: The so-called suspension flows. This makes suspension flows the more natural candidates to explore Question 1. In the sequel, we recall its definition. 

Let $M$ be a smooth closed Riemannian manifold. Let $f: M\to M$ be a diffeomorphism. Let $\rho: M\to \mathbb{R}$ be a continuous function (the roof function). Define a new manifold by taking the $\overline{M}$  as the quotient space of $M\times \mathbb{R}$, through the equivalence relation $(x,\rho(x))\sim (f(x),0)$. It is well known that $\overline{M}$ is a closed and smooth Riemannian manifold.

\begin{definition}\label{Def-Susp}
    The suspension flow of $f$ with roof $\rho$ is the flow $\phi: \mathbb{R}\times \overline{M}\to \overline {M}$ defined 
  by setting $\phi(t,(x,s))=(f^n(x),s')$, where $n$ and $s'$ satisfy
$$\sum_{j=1}^{n-1}\rho(f^j(x))+s'=t+s, \,\,\,\, \,\,\, 0\leq s'\leq \rho(f^n(x)).$$
We will refer to the set  $\{(x, t): 0 \leq t < \rho(x)\}$ as the vertical segment of $x$.
\end{definition}

We then have the following natural question:

\vspace{0.1in}
\textit{\textbf{Question 2:} Suppose that some property holds for a diffeomorphism $f$. Is the same or an analogous property true for suspensions of $f$?   }

\vspace{0.1in}

Several authors searched for dynamical properties that have been preserved by the suspension process in the past decades. For instance, it was proved that expansiveness, shadowing, and hyperbolicity are preserved by suspension (see \cite{BW}, \cite{Th} and \cite{Katok}). On the other hand, other properties like the specification property, are only preserved for specific roof functions.  It was proven in \cite{Ohn} that the suspension process does not necessarily preserve the values of the topological entropy, but its positivity is preserved.

Besides, a typical phenomenon exclusive to the flow theory makes their study more delicate, namely,  the existence of singularities. Let us recall this concept. Let $M$ be a compact and boundaryless Riemannian manifold and $\phi$ be a $C^1$-flow on $M$ and let $X_{\phi}$ denote its velocity vector field. A point $p\in M$ is a singularity for $\phi$ if $X(p)=0$. The existence of singularities accumulated by regular orbits is an obstruction for hyperbolicity as evidenced by the paradigmatic Lorenz Geometric Attractor (\cite{ABS} and \cite{Gu1}). 
In \cite{ALRS} it was proved that the shadowing property also cannot live together with hyperbolic singularities on flows with recurrence. 

The suspension process afore-described gives rise to a non-singular flow. Besides, another process provides a singular flow related to the diffeomorphism $f$. Let $f:M\to M$ be  $C^1$-diffeomorphism and let $\phi:\mathbb{R}\times \overline{M}\to \overline{M}$ be its suspension flow. Let $X_{\phi}$ be the velocity vector field of $\phi$. Fix  a compact subset $S\subset \overline{M}$ and let $\alpha:\overline{M}\to \mathbb{R}$ be a non-negative smooth function such that $\alpha(x)=0$ if, and only if, $x\in S$. 

\begin{definition}
    The singular suspension of $f$ with roof function $\rho$ and a brake $\alpha$ is the flow $\psi$ on $\overline{M}$ induced by the vector field $X=\alpha X_{\phi}$. 
\end{definition}    
    \begin{remark}
        
     The choice for naming the function $\alpha$ as a brake is justified by the effect of $\alpha$ on $X_{\phi}$, which changes the original velocity, stops the points on $S$, and slows down every point approaching the set $S$. 

\end{remark}

This concept was previously explored in several works, such as \cite{Ko2} and \cite{SYZ} where assumed that $S$ is a singleton and  \cite{Shi} which allows $S$ to be a general compact set. It is clear that $S$ is the set of singularities of $\psi$. As we will see later, the properties of the set $S$ have a deep influence on the dynamical properties of the singular suspensions. The previous definition naturally leads us to the following modification of Question 2:

\vspace{0.1in}
\textit{\textbf{Question 3:} Suppose that some property holds for a diffeomorphism $f$. Is it true that the same or an analogous result property for singular suspensions of $f$?}

\vspace{0.1in}

It's easy to see that the simple addition of a singularity to the suspension of a hyperbolic diffeomorphism is enough to rule out the hyperbolicity of its suspension. In \cite{Shi}, it is proved the shadowing property is not preserved by singular suspensions, while in \cite{Ko2}, it was proved that if $S$ is a singleton, then $\psi$ has the finite shadowing property if, and only if, $f$ has the finite shadowing property. Also, in \cite{Shi}, we can see that the singular suspension of an expansive homeomorphism with a single singularity is expansive in the sense of \cite{Ko1}. In this work our focus shifts to the topological entropy of singular suspensions. 

In \cite{SYZ} it was proved that the positiveness of topological entropy is not preserved by equivalence for singular smooth flows. They achieved this result by constructing two different singular suspensions $\psi_1$ and $\psi_2$ of a minimal diffeomorphism $f$ with positive topological entropy.  The two flows $\psi_1$ and $\psi_2$ share the same singularity $\sigma$, but their respective brakes $\alpha_1$ and $\alpha_2$ decay to zero at $\sigma$ with different speeds. Consequently, they obtain a flow with positive entropy, while the other one has zero entropy.
These examples make us aware of the importance of the decay rate of the function $\alpha$ to preserve positive entropy.    This highlights a stark contrast between suspensions and singular suspensions regarding topological entropy. The examples illustrated in \cite{SYZ} lead us to the main question addressed in this work:

\vspace{0.1in}
\textit{\textbf{Main Question:} How easy is it to break positive entropy through the singular suspension process?}
\vspace{0.1in}

The first and easiest answer to this question is that it depends on the set $S$. Indeed, we will see in Section \ref{characsection} that breaking positive entropy through singular suspension is pretty easy if one allows $S$ to be uncountable. So, we need to be more restrictive on the set $S$ to investigate the main question.  Our first main result states that if we allow $S$ to be countable, it is very rare to find a diffeomorphism whose positiveness of topological entropy is broken by singular suspensions.
\begin{mainthm}\label{generic}
   Let $M$ be a  closed Riemannian manifold. There is an open and dense subset $\mathcal{U}\subset \emph{Diff}^1(M)$ such that if $f\in \mathcal{U}$ and  $\psi$ is a singular suspension of $f$ with countable many singularities, then either both $f$ and $\psi$ have positive topological entropy or both $f$ and $\psi$ have zero topological entropy.

\end{mainthm}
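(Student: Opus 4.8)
The plan is to reduce the statement to a combinatorial fact about full shifts, together with the $C^1$-generic fact that positive entropy produces horseshoes.

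\emph{Preliminary reductions.} Write $\phi$ for the nonsingular suspension flow of $f$ with roof $\rho$ on $\overline M$, so that $\psi$ is the flow of $\alpha X_\phi$ and $S=\alpha^{-1}(0)\subset\overline M$ is compact and countable. Let $H\subset M$ be the \emph{shadow} of $S$, i.e. the set of $x\in M$ whose vertical segment meets $S$; since each point of $S$ lies on the vertical arc of at most two points of $M$, $H$ is countable, and since $S$ is compact and the arcs vary continuously, $H$ is closed, hence compact. I record two facts. (1) If $\Gamma_0\subset M$ is a compact $f$-invariant set disjoint from $H$, then its $\phi$-suspension $\widehat\Gamma_0\subset\overline M$ is compact, $\phi$-invariant, disjoint from $S$, and $h_{\mathrm{top}}(\phi|_{\widehat\Gamma_0})>0$ whenever $h_{\mathrm{top}}(f|_{\Gamma_0})>0$ (Abramov's formula for suspensions); as $\widehat\Gamma_0$ and $S$ are disjoint compacta, $\alpha\ge c>0$ on $\widehat\Gamma_0$, so $\psi|_{\widehat\Gamma_0}$ is a reparametrization of the fixed-point-free flow $\phi|_{\widehat\Gamma_0}$ by a factor bounded away from $0$ and $\infty$, whence $h_{\mathrm{top}}(\psi)\ge h_{\mathrm{top}}(\psi|_{\widehat\Gamma_0})>0$. (2) For any ergodic $\psi$-invariant $\mu$, either $\mu(S)=1$ and $h_\mu(\psi)=0$ (the points of $S$ are fixed), or $\mu(S)=0$, in which case $\nu:=\frac{\alpha\mu}{\int\alpha\,d\mu}$ is $\phi$-invariant and Abramov's formula gives $h_\mu(\psi)=\bigl(\textstyle\int\alpha\,d\mu\bigr)h_\nu(\phi)$; hence $h_{\mathrm{top}}(\psi)\le(\sup\alpha)\,h_{\mathrm{top}}(\phi)$. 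Combined with Ohno's theorem \cite{Ohn} (positivity of topological entropy is preserved by suspension), fact (2) already proves the easy half: $h_{\mathrm{top}}(f)=0$ forces $h_{\mathrm{top}}(\phi)=0$, hence $h_{\mathrm{top}}(\psi)=0$. It remains to treat the case $h_{\mathrm{top}}(f)>0$.

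\emph{The combinatorial core.} I claim: if $\Sigma$ denotes the full $2$-shift and $H'\subsetneq\Sigma$ is countable, then some subshift $Y\subseteq\Sigma$ satisfies $h_{\mathrm{top}}(Y)>0$ and $Y\cap H'=\emptyset$. Enumerate $H'=\{z_1,z_2,\dots\}$, let $w_j$ be the word occupying coordinates $0,\dots,m_j-1$ of $z_j$, and let $Y$ be the subshift of all sequences in which no $w_j$ ever occurs. Then $z_j\notin Y$ for every $j$, so $Y\cap H'=\emptyset$. If the lengths are chosen to grow fast enough, say $m_j=j+C$ with $C$ large, the forbidden words are long and summably sparse, so a Lovász-local-lemma/counting estimate gives $h_{\mathrm{top}}(Y)\ge\log 2-\sum_j\delta(m_j)>0$, where $\delta(m)\to0$. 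This is the one step where countability of $S$ enters essentially—for uncountable $H'$ no such $Y$ need exist, which is the mechanism behind Section~\ref{characsection}.

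\emph{Genericity and conclusion.} Let $\mathcal O\subset\mathrm{Diff}^1(M)$ be the open set of diffeomorphisms possessing a nontrivial hyperbolic horseshoe, and set $\mathcal U:=\mathrm{Diff}^1(M)\setminus\partial\mathcal O=\mathcal O\cup\bigl(\mathrm{Diff}^1(M)\setminus\overline{\mathcal O}\bigr)$, which is open and dense (its complement $\partial\mathcal O$ is closed and nowhere dense). If $f\in\mathcal U\setminus\overline{\mathcal O}$, then no diffeomorphism $C^1$-close to $f$ carries a horseshoe; by the $C^1$ perturbation theory of entropy and horseshoes (positive topological entropy is $C^1$-approximable by diffeomorphisms with horseshoes), this forces $h_{\mathrm{top}}(f)=0$, and fact (2) finishes that case. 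If instead $f\in\mathcal O$, fix a horseshoe and pass to a compact $f$-invariant subset $\Lambda$ conjugate to the full $2$-shift; since $\Lambda$ is uncountable, $H':=H\cap\Lambda$ is a countable proper subset of $\Lambda$. The combinatorial core then produces a subshift $Y\subseteq\Lambda$ with $h_{\mathrm{top}}(f|_Y)>0$ and $Y\cap H=\emptyset$; taking $\Gamma_0:=Y$ in fact (1) gives $h_{\mathrm{top}}(\psi)>0$, and of course $h_{\mathrm{top}}(f)>0$. Hence for every $f\in\mathcal U$ the entropies $h_{\mathrm{top}}(f)$ and $h_{\mathrm{top}}(\psi)$ vanish simultaneously or are simultaneously positive, for every singular suspension $\psi$ with countably many singularities.

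\emph{Expected main obstacle.} The bookkeeping—the Abramov reductions, the suspension entropy formula, and reparametrization-invariance of positive entropy for fixed-point-free flows—is routine. The real content is twofold: the combinatorial core, which is short to prove but conceptually decisive because it isolates exactly what ``countably many singularities'' buys us; and the $C^1$-generic input needed both to manufacture the horseshoe once $h_{\mathrm{top}}(f)>0$ and to know that $\overline{\mathcal O}$ already contains every positive-entropy diffeomorphism. Correctly invoking the latter and then checking that the horseshoe it supplies can be trimmed to avoid an \emph{a priori} adversarial (yet countable) singular set is where I expect the difficulty to concentrate.
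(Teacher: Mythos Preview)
Your overall architecture matches the paper's: reduce to finding an $f$-invariant positive-entropy set disjoint from the countable shadow of $S$, then push entropy to $\psi$ via Abramov and a bounded reparametrization. Facts (1) and (2) are fine and correspond to the paper's Theorem \ref{extraresult} and Lemma \ref{zeroentr}.

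There is, however, a real gap in your genericity step, and you correctly flag it yourself in the ``expected obstacle'' paragraph without resolving it. Your set $\mathcal U=\mathcal O\cup\bigl(\mathrm{Diff}^1(M)\setminus\overline{\mathcal O}\bigr)$ is open and dense for free, but for $f\in\mathrm{Diff}^1(M)\setminus\overline{\mathcal O}$ you need $h_{\mathrm{top}}(f)=0$, i.e.\ that every positive-entropy $C^1$ diffeomorphism lies in $\overline{\mathcal O}$. This is not a standard theorem; entropy is not lower semicontinuous in $C^1$, there is no Katok horseshoe theorem in $C^1$, and the vague appeal to ``$C^1$ perturbation theory of entropy and horseshoes'' does not point to an actual result. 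The paper sidesteps this entirely: it invokes Crovisier's $C^1$-generic trichotomy \cite{Cro} (Morse--Smale, transverse homoclinic intersection, or a minimal set accumulated by homoclinic classes), observes that cases two and three both yield a horseshoe, and takes $\mathcal U$ to be the union of Morse--Smale neighborhoods and horseshoe neighborhoods. On that set the dichotomy is immediate: Morse--Smale gives both entropies zero, horseshoe gives both positive via Theorem \ref{entrhs}. No claim about $\{h>0\}\subset\overline{\mathcal O}$ is needed.

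Your combinatorial core is also different in flavor. The paper uses Grillenberger's theorem \cite{Gr}: the full $2$-shift contains uncountably many pairwise disjoint \emph{minimal} subsystems with distinct positive entropies, so a countable obstacle can meet only countably many of them, leaving one (in fact uncountably many) disjoint from $H'$. This is a one-line pigeonhole once Grillenberger is cited. Your forbidden-word construction with $m_j\to\infty$ is morally correct, but the entropy bound $h(Y)\geq\log 2-\sum_j\delta(m_j)$ needs a genuine argument (a naive union bound gives a factor of $n$ that kills it; one needs either a transfer-matrix estimate or a Lov\'asz-local-lemma count done carefully). It works, but it is more labor than the Grillenberger route and you have not actually carried it out.
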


    At this point, we would like to highlight we do not assume any extra hypothesis on the decay rate of $\alpha$ to obtain Theorem \ref{generic}. Indeed, while we are on the set $\mathcal{U}$ the decay rate of $\alpha$  does not matter for the entropy. This evidences the novelty and strength of our results.  
    Similarly, all the other results in this work are free from constraints on the decay rate of $\alpha$. 

The main ingredients to prove the previous result are Theorems \ref{equiv}, where it is obtained as a criterion for zero entropy of singular suspensions, and Theorem \ref{entrhs}, where we obtain positive entropy trough measures supported on minimal subsystems of $f$. Our methods also allow us to obtain a stronger result in the conservative world. In the next result, $\text{Diff}^1_{m}(M)$ denote the space of conservative $C^1$-diffeomorphimsms of $M$.

\begin{mainthm}\label{generic2}
   Let $M$ be a smooth closed Riemannian manifold. There is an open and dense subset of $\mathcal{U}\subset \emph{Diff}^1_{m}(M)$ such that if $f\in \mathcal{U}$ and  $\psi$ is a singular suspension of $f$ with countable many singularities, then $\psi$ has positive topological entropy.

\end{mainthm}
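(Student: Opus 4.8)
The plan is to leverage the celebrated $C^1$-generic dichotomy for conservative diffeomorphisms together with Theorem~\ref{entrhs}. By a theorem of Bochi (and its higher-dimensional analogue due to Bochi--Viana), there is a residual—hence in particular dense $G_\delta$—subset of $\operatorname{Diff}^1_m(M)$ at which every diffeomorphism is either Anosov or has at least one zero Lyapunov exponent almost everywhere. Since we want an \emph{open and dense} set rather than merely a residual one, I would combine this with the fact that the Anosov property is $C^1$-open and that, by the connecting-lemma technology (Bonatti--Crovisier and the ergodic closing lemma of Ma\~n\'e), $C^1$-generic conservative diffeomorphisms are transitive and in fact have dense periodic orbits supporting a rich family of minimal sets. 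The key point is that in \emph{either} branch of the dichotomy one can produce positive entropy for every singular suspension.

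The first step handles the Anosov case. If $f$ is Anosov and conservative, then $f$ has positive topological entropy, and moreover $f$ has (many) nontrivial minimal subsets—indeed, the closure of any single non-periodic orbit inside a transitive hyperbolic set, or better, horseshoes carrying ergodic measures of positive entropy that are minimal when restricted appropriately; in any case, the support of a suitable invariant measure gives a minimal subsystem on which $f$ still exhibits positive entropy, or one invokes Theorem~\ref{entrhs} directly with an ergodic measure of positive entropy supported on a minimal set obtained from the Anosov structure. Applying Theorem~\ref{entrhs}, the singular suspension $\psi$ inherits positive topological entropy regardless of the brake $\alpha$ and regardless of the (countable) singular set $S$.

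The second step handles the non-Anosov generic case. Here I would argue that a $C^1$-generic non-Anosov conservative diffeomorphism nonetheless has positive topological entropy supported on a minimal set: by the ergodic closing lemma and Bonatti--Crovisier, generic $f\in\operatorname{Diff}^1_m(M)$ is transitive with hyperbolic periodic points of different indices being homoclinically related, producing robust horseshoes; alternatively one uses Catalan's or Horita--Tahzibi-type results that $C^1$-generically conservative diffeomorphisms which are not Anosov still have positive entropy on suitable subsystems, or—most cleanly—one observes that the arguments behind Theorem~A already show that generically $f$ has positive entropy carried by a minimal set, and then Theorem~\ref{entrhs} finishes the job. The open-and-dense (as opposed to residual) conclusion is then extracted by taking $\mathcal{U}$ to be the union of the (open) Anosov locus with the $C^1$-interior of the set of diffeomorphisms having a transverse homoclinic point, which is dense by the connecting lemma and on which positive entropy with the required minimal-subsystem structure is an open condition.

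The main obstacle I anticipate is upgrading ``residual'' to ``open and dense'': the Bochi-type dichotomy and the closing-lemma constructions give $G_\delta$-dense sets, so I must identify an explicitly open condition—presence of a robust horseshoe, or Anosovness—that (a) holds densely and (b) forces, via Theorem~\ref{entrhs}, positive entropy of every singular suspension. Establishing density of ``robust horseshoe or Anosov'' in $\operatorname{Diff}^1_m(M)$ is exactly where the $C^1$-conservative connecting lemma and the creation of homoclinic tangencies/intersections do the real work; once a horseshoe is present $C^1$-robustly, it contains minimal subsets with positive entropy and the rest is a direct citation of Theorem~\ref{entrhs}.
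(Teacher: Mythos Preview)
Your overall architecture is exactly right and matches the paper: one needs only to show that a $C^1$-dense subset of $\operatorname{Diff}^1_m(M)$ consists of diffeomorphisms carrying a horseshoe; robustness of horseshoes (Proposition~\ref{hsprop}) then upgrades this to an open and dense set, and Theorem~\ref{entrhs} finishes. You also correctly isolate the real difficulty in your final paragraph.

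Where you diverge from the paper is in the mechanism for producing horseshoes densely. The paper splits $\operatorname{Diff}^1_m(M)$ according to the presence of a \emph{dominated splitting}: on the interior $\mathcal{E}_m(M)$ of the complement of $\operatorname{Diff}^1_{m,DS}(M)$ it invokes Buzzi--Crovisier--Fisher \cite{BCT}, which gives a residual subset on which entropy is approximated by horseshoes (in particular, horseshoes exist); on $\operatorname{Diff}^1_{m,DS}(M)$ it invokes Avila--Crovisier--Wilkinson \cite{ACW2}, whose Corollary~D supplies horseshoes on a dense subset. The union $\mathcal{G}_1\cup\mathcal{G}_2$ is then dense, and robustness of horseshoes yields the open-and-dense $\mathcal{U}$.

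Your proposed split along the Bochi--Viana dichotomy (Anosov versus a.e.\ vanishing Lyapunov exponent) is a misstep: the second branch, ``all exponents zero a.e.'', is precisely the regime in which horseshoes are \emph{not} furnished by the Lyapunov data, so Bochi--Viana does no work here. Your fallback to Bonatti--Crovisier and the connecting lemma is closer to a viable alternative---indeed, generically in $\operatorname{Diff}^1_m(M)$ the whole manifold is a single homoclinic class (Arnaud--Bonatti--Crovisier), hence contains horseshoes---but note that ``periodic points of different indices being homoclinically related'' is not the statement you want; a transverse homoclinic intersection of a \emph{single} hyperbolic periodic point is what yields a horseshoe. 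If you pursue that line, cite the homoclinic-class result directly rather than routing through Bochi--Viana.
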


If we allow stronger regularity on $f$ we can obtain a stronger result in the low-dimensional scenario.

\begin{mainthm}\label{3d}
   Let $\psi$ be a three-dimensional $C^{1+\epsilon}$-flow obtained from the singular suspension of a diffeomorphism $f$. If $Sing(\psi)$  is countable, either $f$ and $\psi$ have positive topological entropy or both $f$ and $\psi$ have zero topological entropy. 
\end{mainthm}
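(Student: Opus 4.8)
The plan is to establish the equivalence $h_{top}(\psi)>0\iff h_{top}(f)>0$ by proving the two implications separately; only one of them will use that $\dim\overline M=3$ and that the regularity is $C^{1+\epsilon}$. Write $\phi$ for the ordinary suspension flow of $f$ with roof $\rho$, so that $X_\psi=\alpha X_\phi$ and $S=Sing(\psi)=\alpha^{-1}(0)$ is countable; recall from \cite{Ohn} that $h_{top}(\phi)>0\iff h_{top}(f)>0$, and that $\dim\overline M=3$ forces $M$ to be a closed surface and $f$ a $C^{1+\epsilon}$ diffeomorphism of it.

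\emph{From $\psi$ to $f$.} I would prove the contrapositive: if $h_{top}(f)=0$ then $h_{top}(\psi)=0$. This direction needs neither the dimension nor the regularity. On one hand $h_{top}(\phi)=0$; on the other, the brake only slows regular orbits down (and stops them on $S$), so on any compact $\psi$-invariant set disjoint from $S$ the flow $\psi$ is a time change of $\phi$ by a factor lying in $[c,C]$ with $c>0$, hence has zero entropy; the part of a compact invariant set lying in $S$ is a union of fixed points and contributes nothing (Bowen's additivity over the decomposition into a closed invariant subset and its complement); and globally one obtains a bound of the shape $h_{top}(\psi)\le C^{-1}h_{top}(\phi)=0$ with $C=\sup\alpha$. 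In any case this is exactly the kind of conclusion delivered by the zero-entropy criterion of Theorem \ref{equiv}, which I would simply invoke. This already gives half of the dichotomy.

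\emph{From $f$ to $\psi$.} Assume $h_{top}(f)>0$. Since $f$ is a $C^{1+\epsilon}$ surface diffeomorphism, Katok's theorem produces a horseshoe $\Lambda\subset M$, i.e.\ a compact transitive locally maximal hyperbolic set conjugate to a subshift of finite type of positive entropy, which in particular contains a subsystem conjugate to the full $2$-shift. The crucial point is that such a shift contains \emph{uncountably many} pairwise distinct minimal subsystems of positive entropy — for instance, by the Grillenberger and Hahn-Katznelson constructions there is a minimal subshift of entropy exactly $h$ for every $h$ in some interval $(0,h_0)$. By contrast, the projection $\pi(S)\subset M$ of $S$ onto the base is countable, and a countable set can meet only countably many minimal subsystems of $f$ (a point belonging to a minimal set is uniformly recurrent, and then it determines that set uniquely). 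Hence one may choose a minimal subsystem $K\subset\Lambda$ with $h_{top}(f|_K)>0$ and $K\cap\pi(S)=\emptyset$; $f$-invariance of $K$ then makes its suspension $\widehat K\subset\overline M$ a compact $\psi$-invariant set disjoint from $S$, on which $\alpha$ is bounded below by a positive constant. Applying Theorem \ref{entrhs} to $K$ (equivalently: on $\widehat K$ the flow $\psi$ is a tame time change of the suspension of $f|_K$, whose entropy is positive by \cite{Ohn}) gives $h_{top}(\psi)\ge h_{top}(\psi|_{\widehat K})>0$, which closes the argument.

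\emph{Main obstacle.} The delicate step is producing the minimal subsystem $K$ of positive entropy disjoint from $\pi(S)$. One cannot get away with a sub-horseshoe here: every subshift of finite type of positive entropy has periodic orbits, and the singular set $S$ could be chosen so that $\pi(S)$ contains all of them, so the argument genuinely requires the periodic-orbit-free, positive-entropy minimal subshifts furnished by the Grillenberger/Hahn-Katznelson-type constructions, and the heart of the matter is the cardinality comparison $2^{\aleph_0}$ (many such minimal sets) versus $\aleph_0$ (countability of $Sing(\psi)$). The remaining verifications — that Theorem \ref{equiv} applies with no extra hypotheses and that $K$ as constructed satisfies the hypotheses of Theorem \ref{entrhs} — are routine.
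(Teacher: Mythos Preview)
Your proposal is correct and follows essentially the same route as the paper: the implication $h(f)=0\Rightarrow h(\psi)=0$ is Lemma~\ref{zeroentr} (a direct consequence of Theorem~\ref{equiv}), and for the other direction the paper also invokes Katok's horseshoe theorem for $C^{1+\epsilon}$ surface diffeomorphisms and then Theorem~\ref{entrhs}. Note only a small redundancy: the Grillenberger cardinality argument you spell out (uncountably many pairwise disjoint positive-entropy minimal subsystems versus countable $A_{Sing}$) is precisely the content of the \emph{proof} of Theorem~\ref{entrhs}, so once you have the horseshoe you can cite that theorem directly rather than first building $K$ and then invoking it.
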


We apply our techniques to explore the entropy theory of expansive flows. Expansiveness is a topological property of dynamical systems deeply related to the concept of hyperbolicity.  As with many other dynamical properties, expansiveness does not behave well together with the existence of singularities. Indeed, the concept of expansiveness for flows was first introduced in \cite{BW} to study the expansive behavior of Axiom A flows. This concept of expansiveness was proven to be inefficient in covering the types of expansiveness displayed by singular flows, leading to the work \cite{Ko1}, where the concept of $k^*$-expansiveness was introduced to cover the behavior of the Geometric Lorenz Attractor. The  $k^*$-expansiveness is weaker than the expansiveness introduced in \cite{BW}. Nevertheless,  these concepts are equivalent in non-singular scenarios, and therefore every time we mention expansiveness we are referring to the $k^*$-expansiveness (see Section \ref{expsection} for the precise definition).

Expansive diffeomorphisms are known to have positive entropy (see \cite{Ka}). In \cite{ACP} it is proven that every non-singular expansive flow has positive topological entropy, then a natural question is whether expansive singular flows have positive topological entropy. The answer to this question in full generality is negative. Indeed, in \cite{Ar3} it is presented as an example of singular expansive flow on a surface, which must have zero topological entropy by \cite{Y}. In this way, we need to remove the two-dimensional scenario to keep this question meaningful. Partial results were obtained in \cite{AAR} for flows with Lyapunov-stable non-singular subsets. In this work, we shall investigate this question for expansive flows obtained as singular suspensions of diffeomorphisms. precisely, we have the following result: \\

\begin{mainthm}\label{expentr}
    Let $\psi$ be an expansive $C^{1}$-flow on a three-dimensional smooth and closed Riemannian manifold. Suppose $\psi$ is obtained as a singular suspension of a diffeomorphism $f$,  then $\psi$ has positive topological entropy.
\end{mainthm}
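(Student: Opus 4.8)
The plan is to reduce the positive-entropy conclusion to the dichotomy already available from Theorem D (the $3$-dimensional singular suspension result). By hypothesis $\psi$ is a singular suspension of a diffeomorphism $f$ on a $3$-manifold, so $f$ is a surface diffeomorphism, and the singular set $S = \mathrm{Sing}(\psi)$ is a compact subset of $\overline M$. The first step is to argue that expansiveness of $\psi$ forces $S$ to be small — in fact countable (indeed totally disconnected, hence zero-dimensional, in the surface/$3$-flow setting). Here I would invoke the $k^*$-expansiveness machinery: a nondegenerate continuum of singularities would contain, for any sufficiently fine expansivity constant, pairs of points whose full orbits (which are just the points themselves, since they are fixed) stay within $\epsilon$ while no reparametrization separates them, violating $k^*$-expansiveness — or more robustly, one uses the known structural constraints on singularities of expansive flows on low-dimensional manifolds (of Lorenz/saddle type with finitely many separatrices, cf.\ \cite{Ko1}, \cite{Ar3}) to conclude $S$ is finite, a fortiori countable. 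Once $S$ is countable we are exactly in the hypothesis of Theorem \ref{3d} (if the $C^{1+\epsilon}$ regularity is available) or, with $C^1$ only, we instead invoke the entropy criterion Theorem \ref{equiv} together with Theorem \ref{entrhs} directly, which is the route I expect is actually intended here.

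The heart of the argument is then the following alternative. By the dichotomy, either $\psi$ (and $f$) have positive topological entropy — in which case we are done — or both $f$ and $\psi$ have zero topological entropy; I must rule out the latter. Suppose for contradiction $h_{\mathrm{top}}(\psi) = 0$, so $h_{\mathrm{top}}(f) = 0$ as well. Now I use the hypothesis that $\psi$ is expansive. The key point is that an expansive flow with zero topological entropy on a closed manifold of dimension $\geq 2$ is essentially impossible once one removes the degenerate surface case: by \cite{ACP}, a \emph{nonsingular} expansive flow has positive entropy, and the content of Theorem E is precisely to push this past the singular obstruction. So I would show that zero entropy of $\psi$, combined with $S$ being at most countable, lets me restrict attention to the complement of small neighborhoods of $S$, where $\psi$ behaves like a nonsingular expansive flow (after the time-change by the brake $\alpha$, the orbits away from $S$ are reparametrizations of $\psi_{\mathrm{susp}}$-orbits, hence of $f$-orbits). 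On that nonsingular part, expansiveness is inherited (expansiveness is not destroyed by passing to an invariant subset containing the nonwandering dynamics away from $S$), and a nontrivial nonsingular invariant piece would carry positive entropy by the arguments behind \cite{ACP} / \cite{ACP}-type estimates, contradicting $h_{\mathrm{top}}(\psi)=0$ unless all recurrence has been absorbed into $S$. But if $\overline{\Omega(\psi)} \subset S$ with $S$ countable, then $f$ itself would have its nonwandering set trivialized, forcing $\Omega(f)$ to be finite, so $f$ is Morse–Smale-like on a surface — and such an $f$ cannot be expansive unless it is trivial, while its suspension (even singular) then cannot be expansive in the $k^*$ sense on a $3$-manifold either, contradiction. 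Either branch yields a contradiction, so $h_{\mathrm{top}}(\psi) > 0$.

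Concretely, the steps in order are: (1) show expansiveness of $\psi$ $\Rightarrow$ $\mathrm{Sing}(\psi) = S$ is at most countable (ideally finite), using $k^*$-expansiveness and the local structure of singularities of expansive flows; (2) apply the dichotomy of Theorem \ref{3d}/Theorem \ref{equiv} to obtain: either $h_{\mathrm{top}}(\psi) > 0$ (done) or $h_{\mathrm{top}}(f) = h_{\mathrm{top}}(\psi) = 0$; (3) in the zero-entropy case, analyze $\psi$ away from $S$ as a reparametrized nonsingular flow carrying the recurrence of $f$, and use \cite{ACP}-type positivity of entropy for nonsingular expansive flows to force a contradiction — with the sub-case where all recurrence is inside $S$ handled by noting $f$ would then be non-expansive-compatible with an expansive $k^*$ suspension.

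The main obstacle I anticipate is step (3): making rigorous the passage from "$\psi$ is expansive on all of $\overline M$" to "the nonsingular part carries an expansive flow to which \cite{ACP} applies." Expansiveness is a global property, and restricting to $\overline M \setminus U(S)$ need not give a flow (orbits exit the set) — so one must instead work with a compact $\psi$-invariant subset of $\overline M \setminus S$ containing $\Omega(\psi) \setminus S$, verify $k^*$-expansiveness is inherited there, and only then apply the nonsingular result. Controlling how orbits interact with the brake region near $S$ — where time is slowed arbitrarily — without losing the expansivity estimate is the delicate technical core, and is presumably where the paper's earlier lemmas on the entropy criterion (Theorem \ref{equiv}) do the real work, since that criterion is exactly designed to quarantine the influence of a countable singular set on the entropy.
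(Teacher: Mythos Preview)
Your proposal has a genuine gap at the $C^1$-regularity level, and you yourself correctly flag the tension without resolving it. You propose to invoke Theorem~\ref{3d}, but that theorem requires $C^{1+\epsilon}$ because it rests on Katok's horseshoe theorem for surface diffeomorphisms. Your fallback --- ``invoke Theorem~\ref{equiv} together with Theorem~\ref{entrhs} directly'' --- requires producing a \emph{horseshoe} for $f$, and you never say how to do this in $C^1$. Your step~(3) is meant to close the zero-entropy branch by contradiction, but the argument there is not rigorous: restricting $\psi$ to a compact invariant set away from $S$ does not give a flow on a closed manifold, so \cite{ACP} does not apply; and your final sub-case (``such an $f$ cannot be expansive'') presupposes that $f$ itself is expansive, which you never establish.

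The paper's route is different and sidesteps all of this. After observing, as you do, that expansiveness of $\psi$ forces $\mathrm{Sing}(\psi)$ to be finite (Proposition~\ref{finitesing}), the paper proves directly that \emph{expansiveness of the singular suspension $\psi$ implies expansiveness of the base diffeomorphism $f$} (Theorem~\ref{cw-exp}). This is the key idea missing from your outline. Once $f$ is known to be an expansive homeomorphism of a closed surface, Lewowicz's classification \cite{Lewo} says $f$ is topologically conjugate to a pseudo-Anosov map; such maps have dense periodic points and dense transverse stable/unstable foliations, hence transverse homoclinic intersections, hence a (topological) horseshoe --- all without any $C^{1+\epsilon}$ input. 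Now Theorem~\ref{entrhs} applies and gives $h(\psi)>0$. The paper even remarks explicitly, immediately after the statement of Theorem~\ref{expentr}, that it is \emph{not} a consequence of Theorem~\ref{3d} precisely because of the $C^1$ versus $C^{1+\epsilon}$ obstacle you ran into.
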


\begin{rmk}
    We remark that even though Theorems \ref{3d} and \ref{expentr} are stated in the three-dimensional scenario, Theorem \ref{expentr} is not a consequence of Theorem \ref{3d}. Indeed, to obtain Theorem \ref{expentr} we need to apply different techniques in order to handle the $C^1$-regularity.    
    \end{rmk}

\begin{example}
    Let $f:M\to M$ be an expansive $C^{1+\eps}$-diffeomorphism on a surface $M$. Let $\phi$ be the suspension of $f$ with roof function $\rho\equiv 1$. Fix $p\in \overline{M}$ and set $S=\{p\}$. Consider any brake $\alpha$ which is zero only at $S$. Then, by Theorem 3.5 in \cite{Shi}, the singular suspension $\phi$ with brake $\alpha$ is expansive. In particular, it has positive topological entropy due to Theorem \ref{expentr}.
\end{example}
The previous example gives us a family of expansive flows to which Theorem \ref{expentr} applies. As we will see in Section \ref{expsection}, we will expand this class by showing that $S$ does not need to be a singleton. \\
The richest hyperbolic dynamics to which all the above theorems apply are Anosov diffeomorphisms (conservative or not). For this type of dynamics, there are no countable singularities that nullify the topological entropy of its singular suspension flow. Our final result shows that it is very expensive to nullify the topological entropy of a singular suspension flow from an Anosov diffeomorphism. More precisely
\begin{mainthm}\label{Anosov-Susp}
    Suppose $f$ is an Anosov diffeomorphism and $\psi$ is a singular suspension of $f$. Then $h(\psi)=0$ if, and only if, $\Omega(f)\subset\pi(Sing(\psi))$.
\end{mainthm}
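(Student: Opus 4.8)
The plan is to prove the two implications of Theorem \ref{Anosov-Susp} separately, exploiting the structural theory of Anosov diffeomorphisms together with the entropy criteria already established (Theorems \ref{equiv} and \ref{entrhs}).

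\textbf{The easy direction: $\Omega(f)\subset\pi(\sing(\psi))$ implies $h(\psi)=0$.} Recall that for an Anosov diffeomorphism $f$ the non-wandering set $\Omega(f)$ coincides with the closure of the periodic points and, by the spectral decomposition, carries all the entropy of $f$ (indeed $h(f)=h(f|_{\Omega(f)})$). Moreover every $f$-invariant probability measure is supported on $\Omega(f)$. Now $\pi\colon\overline M\to M$ denotes the projection identifying the base; the hypothesis says that the full non-wandering set of the base diffeomorphism lies under the singular set $S=\sing(\psi)$. I would first argue that in this situation the singular suspension $\psi$ has no recurrence off the singularities: any point of $\overline M$ whose orbit returns near itself must project, via the first-return structure, to a non-wandering point of $f$, hence sit on a vertical fiber over $S$; but on such a fiber the brake $\alpha$ forces the flow to crawl toward $S$ in forward and backward time, so the only non-wandering points of $\psi$ are the singularities themselves. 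A flow whose non-wandering set is a finite-or-countable union of fixed points has zero topological entropy (this follows from the variational principle, since every invariant measure is then supported on $\sing(\psi)$ where the flow is trivial, combined with the entropy criterion of Theorem \ref{equiv}). This gives $h(\psi)=0$.

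\textbf{The hard direction: $h(\psi)=0$ implies $\Omega(f)\subset\pi(\sing(\psi))$.} I would prove the contrapositive: if $\Omega(f)\not\subset\pi(\sing(\psi))$, then $h(\psi)>0$. Suppose there is $x\in\Omega(f)$ with $x\notin\pi(S)$. Since $f$ is Anosov, $\Omega(f)$ is a finite union of basic sets, and $x$ lies in one of them, say $\Lambda$; periodic points are dense in $\Lambda$ and $\Lambda$ supports the full (positive, if $\dim\Lambda\geq 1$) entropy of that piece. The goal is to produce an $f$-invariant measure $\mu$ with positive metric entropy whose support avoids $\pi(S)$, and then feed $\mu$ into Theorem \ref{entrhs} to conclude $h(\psi)>0$. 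The subtle point is that Theorem \ref{generic}'s mechanism passes through \emph{minimal} subsystems of $f$, whereas here we want to use the hyperbolic structure directly. I would instead do the following: since $S$ is countable, $\pi(S)$ is countable, hence a closed $f$-invariant set of topological entropy zero cannot be all of a basic set $\Lambda$ with $h(f|_\Lambda)>0$; more strongly, by a standard argument (e.g. using horseshoes inside $\Lambda$, which exist by Katok's theory / the shadowing lemma) one finds a compact $f$-invariant hyperbolic subset $K\subset\Lambda$ disjoint from the countable set $\pi(S)$ and carrying positive entropy — indeed any subhorseshoe obtained by excluding finitely many itinerary symbols still has positive entropy, and one can always avoid a prescribed countable set because $K$ can be taken totally disconnected and "thin". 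Restricting to an ergodic measure $\mu$ on $K$ with $h_\mu(f)>0$ and $\su(\mu)\cap\pi(S)=\emptyset$, Theorem \ref{entrhs} yields a measure for $\psi$ with positive entropy, so $h(\psi)>0$.

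\textbf{Main obstacle.} The delicate step is the construction of the positive-entropy hyperbolic subset $K\subset\Lambda$ that is \emph{disjoint from $\pi(S)$}. When $\pi(S)$ meets $\Lambda$ in a countable dense set this is a genuine point: one cannot simply "remove" $\pi(S)$ from $\Lambda$ and keep it invariant. The way around it is to work at the level of the symbolic coding of the basic set: fix a Markov partition of $\Lambda$, and observe that a countable set of points corresponds, under the coding, to a countable set of itineraries; a countable set of itineraries cannot contain a subshift of finite type of positive entropy on a refined alphabet, so by passing to a sub-SFT determined by finitely many forbidden words (chosen to exclude the finitely many partition elements that would be needed to shadow any fixed point of $\pi(S)$ that is periodic, and then iterating to kill the rest) one extracts a positive-entropy invariant subset whose points have itineraries staying away from those of $\pi(S)$. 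This requires care because $\pi(S)$ need not consist of periodic points of $f$; one must argue that a non-periodic point contributes only "measure-zero / entropy-zero" obstruction and can be avoided by a thin horseshoe. I expect this symbolic-dynamics avoidance argument to be the technical heart of the proof, with the Anosov spectral decomposition and Theorems \ref{equiv}–\ref{entrhs} doing the rest of the work routinely.
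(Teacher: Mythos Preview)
There is a genuine gap in your hard direction: you write ``since $S$ is countable, $\pi(S)$ is countable,'' but Theorem~\ref{Anosov-Susp} places no cardinality restriction on $\sing(\psi)$. Indeed, the paragraph introducing the theorem says explicitly that for Anosov diffeomorphisms no \emph{countable} set of singularities can nullify the entropy (this already follows from Theorem~\ref{entrhs}, since Anosov systems contain horseshoes); the whole point of Theorem~\ref{Anosov-Susp} is to treat the general case, where $S$ is typically uncountable. Once $S$ is uncountable your symbolic avoidance scheme collapses: you cannot dodge $\pi(S)$ by deleting countably many itineraries, and manufacturing a positive-entropy invariant set disjoint from $A_{Sing}$ becomes the actual difficulty rather than a routine step. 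A smaller point: the result you mean to invoke when feeding a base measure $\mu$ with $\text{Supp}(\mu)\cap A_{Sing}=\emptyset$ back into the suspension is Theorem~\ref{extraresult}, not Theorem~\ref{entrhs}; the latter already has countability of $\sing(\psi)$ in its hypotheses.

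The paper's argument for the hard direction is structurally different and does not attempt to build an invariant set disjoint from $A_{Sing}$. Assuming $h(\psi)=0$, the contrapositive of Theorem~\ref{extraresult} forces every horseshoe $\Lambda$ of $f$ to meet $A_{Sing}$, hence (by $f$-invariance of $\Lambda$) to meet $\pi(S)$ itself. Since $f|_{\Omega(f)}$ is non-wandering, expansive, and has shadowing, Theorem~\ref{potencia} produces a horseshoe inside every non-empty open subset of $\Omega(f)$; combining the two facts, the paper deduces that points of $\pi(S)$ are dense in $\Omega(f)$, and since $\pi(S)$ is closed this gives $\Omega(f)\subset\pi(S)$. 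Your easy direction is correct in spirit but circuitous; the paper's route via Theorem~\ref{equiv} is one line: every ergodic $\mu$ with $h_\mu(f)>0$ is supported on $\Omega(f)\subset\pi(S)$, where $\gamma\equiv+\infty$, so $E_\mu(\gamma)=+\infty$ and hence $h(\psi)=0$.
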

In the previous result $\Omega(f)$ denotes the non-wandering set of $f$, while $\pi$ denotes the natural projection from $\overline{M}$ to the fiber $M\times\{0\}$. In other words, Theorem \ref{Anosov-Susp} tells us that the only way  of destroying the topological entropy of an Anosov diffeormorphism is by choosing a function $\alpha$ that destroys any non-trivial form of recurrence in $\psi$. 

In Section \ref{exesection} we will see some examples where our results apply to. Moreover, we will see that since our techniques are of topological character, they can also be applied in the scope of topological dynamics. \\

\noindent \textbf{Structure of the Paper:}
In section \ref{Preliminaries} We give precise definitions for the main concepts used during this work and state some classical results. In section \ref{characsection} we study in more detail the entropy properties of singular suspensions. In Section \ref{secproofabc} we provide the proofs for Theorems \ref{generic}, \ref{generic2}, and \ref{3d}. Section \ref{expsection} is devoted to the study of expansive singular suspensions and to the proof of Theorem \ref{expentr}. Finally, in section \ref{exesection} we prove Theorem \ref{Anosov-Susp}, present some examples, and propose a discussion of the applicability of our techniques. 

\section{Preliminaries}\label{Preliminaries}
In this section, we give the precise definitions for the concepts used in this text. We also provide some results and explain some technologies employed to obtain our results.  Before we go into detail, we would like to emphasize that all the manifolds considered in this work are compact, boundaryless, and Riemannian, and with the metric induced by their  Riemannian metrics. Also, recall that for a subset $\mathcal{R}$ of a  complete metric space $(M,d)$ is residual if it is a countable intersection of open and dense subsets of $M$.

\subsection{Discrete-Time Dynamical Systems}

Let $(M,d)$ be a closed manifold and let $f: M \to M$ be a 
 diffeomorphism. For any $x\in M$ we defined the \textit{orbit of $x$} under $f$ as the set $$\mathcal{O}_f(x)=\{f^n(x), n\in \mathbb{Z}\},  $$
 where $f^0$ denotes the identity map of $M$ and $f^n$ denotes the $n$-fold composition of $f$ and $f^{-1}$ for $n>0$ and $n<0$, respectively. A subset  $K\subset M$ is said to be $f$-invariant if $f(K)=K$. We say that a point $x\in M$ is non-wandering of $f$ if, for every neighborhood $U$ of $x$, there is $n>0$ such that $f^{-n}(U)\cap U\neq\emptyset$. We denote by $\Omega(f)$ the set of non-wandering points of $f$ 

\vspace{0.2in}
\noindent \textit{Topological Dynamics}
 \vspace{0.1in}

 \begin{definition}[Topological Transitivity]
 We say that  $f$ is topologically transitive if there is $x\in M$ whose orbit is dense in $M$.  A point with a dense orbit is called a transitive point for $f$.     
 \end{definition}
 
\begin{remark}
    It is well known that if $f$ is transitive, the set of transitive points is a residual subset of $M$. 
\end{remark}

\begin{definition}[Minimality]
    We say that $f$ is minimal if $M$ does not contain a compact and proper $f$-invariant subset.
 \end{definition}

It is classical that $f$ is minimal if, and only, every point in $M$ is transitive. Next, we recall the concept of shadowing property. A sequence $(x_n)_{n\in \mathbb{Z}}$ is  a $\delta$-pseudo orbit if $d(f(x_n),x_{n+1})\leq \delta$, for every $i\in \mathbb{Z}$. We say that a $\delta$-pseudo-orbit $(x_n)$ is $\epsilon$-shadowed if there is $y\in M$ such that $d(f^n(y),x_n)\leq\eps$, for every $n\in\mathbb{Z}$.

\begin{definition}
     We say that $f$ has the shadowing property if, for every $\eps>0$, there is $\delta>0$ such that every $\delta$-pseudo-orbit is $\eps$-shadowed by some point in $M$. 
\end{definition}

Another property that will be important for our exposition is the sensitivity to initial conditions. 

\begin{definition}[Senstitivity]
    We say that $f$ is sensitive to the initial conditions if there is $c>0$ such that for every $x\in M$ and every neighborhood  $U$ of $x$, there is $y\in U$ and $n>0$ such that $d(f^n(y),f^n(x))>c$. 
\end{definition}

Now we define topological entropy, the main concept investigated in this work. We first give the concept of topological entropy for subsets. Let  $N$ be a compact subset of $M$ (not necessarily invariant). Fix $\varepsilon, n>0$. A subset $K\subset N$ is called a  $n$-$\varepsilon$-separated subset of $N$ if for any pair of distinct points $x,y\in K$ there is  $0\leq n_0\leq n$ such that $$d(f^{n_0}(x),f^{n_0}(y))>\varepsilon.$$ 
Let $S(n,\varepsilon, N)$ denote the maximal cardinality of a $n$-$\varepsilon$-separated subset of $N$.  
 
 \begin{remark} It is well known that $S(n,\varepsilon,N)$  is always finite due the compactness of $M$.
 \end{remark}

\begin{definition}

    We define the topological entropy of $f$ on $N$ as the number  $$h(f,N)=\lim\limits_{\varepsilon\to 0}\limsup\limits_{n\to\infty}\frac{1}{n}\log(S(n,\varepsilon, N)).$$
    The topological entropy of $f$ is defined as the number $$h(f)=h(f,M). $$
\end{definition}

\noindent \textit{Ergodic Theory}
\vspace{0.1in}

 Through this work, we will assume that $M$ is measurable space endowed with its Borel $\sigma$-algebra. Next, recall that a probability measure $\mu$ is an \textit{invariant measure} for the diffemorphism $f$ if $\mu(f^{-1}(A))=\mu(A)$ for every Borelian subset $A$. Denote by $\mathcal{M}(f)$ the set of invariant measures for the flow of $X$.  A $f$-invariant measure is \textit{ergodic} if $\mu(A)\mu(A^c)=0$, for every borelian and $f$-invariant set $A$. Let $\mathcal{M}_e(f)$  denote the set of ergodic measures for $f$. The \textit{metric entropy} of an $f$-invariant measure $\mu$ is given by  
\begin{displaymath}
    h_{\mu}(f)=\sup\lbrace h_{\mu}(f,\mathcal{P}): \mathcal{P}\text{ is a finite and measurable partition of } M\rbrace, 
\end{displaymath}
where \begin{displaymath}
    h_{\mu}(f,\mathcal{P})=-\lim_{n\to\infty}\frac{1}{n}\sum_{P\in\mathcal{P}_n}\mu(P)\log\mu(P),\quad \mathcal{P}_n=\mathcal{P}_0\vee f^{-1}(\mathcal{P}_1)\vee\cdots\vee f^{n-1}(\mathcal{P}).
\end{displaymath}

The celebrated variational principle relates the metric entropies of a continuous map  and its topological entropy: 

\begin{theorem}[Variational Principle \cite{Katok}]
For any continuous map $f$ it holds:
$$h(f)=\sup\{h_{\mu}(f); \mu\in \mathcal{M}(f)\}=\sup\{h_{\mu}(f);{\mu\in \mathcal{M}_e(f)}\}  $$
\end{theorem}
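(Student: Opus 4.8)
The plan is to prove the two nontrivial inequalities $\sup_{\mu\in\mathcal{M}(f)}h_\mu(f)\le h(f)$ and $h(f)\le\sup_{\mu\in\mathcal{M}(f)}h_\mu(f)$, and then to upgrade the supremum from invariant to ergodic measures. Throughout I work with the Shannon entropy $H_\nu(\mathcal{Q})=-\sum_{Q\in\mathcal{Q}}\nu(Q)\log\nu(Q)$ of a finite partition $\mathcal{Q}$ and its refinements $\mathcal{Q}_n=\bigvee_{i=0}^{n-1}f^{-i}\mathcal{Q}$, using the identity $h_\mu(f,\mathcal{Q})=\lim_n\frac1n H_\mu(\mathcal{Q}_n)$ together with two structural facts: the map $\nu\mapsto H_\nu(\mathcal{Q})$ is concave, and it is weak-$*$ continuous at any $\mu$ for which every atom of $\mathcal{Q}$ is a $\mu$-continuity set (i.e.\ has null boundary).

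\textbf{Easy inequality.} Fix $\mu\in\mathcal{M}(f)$ and a finite partition $\xi=\{A_1,\dots,A_k\}$; I must bound $h_\mu(f,\xi)$. Using regularity of $\mu$ I choose compact sets $B_i\subset A_i$ with $\mu(A_i\setminus B_i)$ arbitrarily small, set $B_0=M\setminus\bigcup_i B_i$, and form $\beta=\{B_0,B_1,\dots,B_k\}$; then $H_\mu(\xi\mid\beta)$ is small, so it suffices to bound $h_\mu(f,\beta)$. Since $B_1,\dots,B_k$ are pairwise disjoint compacta, there is $\delta>0$ with $d(B_i,B_j)>\delta$ for $i\neq j$, so any ball of radius $\delta/2$ meets at most one of them (besides possibly $B_0$); hence a dynamical $(n,\delta/2)$-ball meets at most two atoms of $\beta$ at each of the times $0,\dots,n-1$, and therefore at most $2^n$ atoms of $\beta_n$. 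Covering $M$ by a minimal $(n,\delta/2)$-spanning set bounds the number of nonempty atoms of $\beta_n$ by $2^n\,\mathrm{span}(n,\delta/2)$, so $H_\mu(\beta_n)\le\log(\#\mathrm{atoms})$ gives $h_\mu(f,\beta)\le\log2+h(f)$, whence $h_\mu(f)\le h(f)+\log2$. Applying the same bound to $f^m$ and using $h_\mu(f^m)=m\,h_\mu(f)$ and $h(f^m)=m\,h(f)$, then dividing by $m$ and letting $m\to\infty$, removes the spurious $\log2$ and yields $h_\mu(f)\le h(f)$.

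\textbf{Hard inequality (the main obstacle).} Fix $\epsilon>0$ and choose $n_j\to\infty$ realizing $h(f,\epsilon):=\limsup_n\frac1n\log S(n,\epsilon,M)$. Let $E_n$ be a maximal $(n,\epsilon)$-separated set, $\sigma_n=\frac1{|E_n|}\sum_{x\in E_n}\delta_x$, and $\mu_n=\frac1n\sum_{i=0}^{n-1}f^i_*\sigma_n$; passing to a subsequence I may assume $\mu_{n_j}\to\mu$ weakly-$*$, and a standard telescoping estimate on $f_*\mu_n-\mu_n$ shows $\mu$ is $f$-invariant. After $\mu$ is known, I fix a partition $\mathcal{P}$ of diameter $<\epsilon$ whose atoms are $\mu$-continuity sets (covering $M$ by finitely many balls of $\mu$-null boundary and disjointifying). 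Because two points in a common atom of $\mathcal{P}_n$ stay $\epsilon$-close along $0,\dots,n-1$, each such atom contains at most one point of $E_n$, so $H_{\sigma_n}(\mathcal{P}_n)=\log|E_n|=\log S(n,\epsilon,M)$. A blocking argument over windows of length $q$, combined with concavity of $\nu\mapsto H_\nu(\mathcal{P}_q)$ and $H_{\sigma_n}(f^{-i}\mathcal{P}_q)=H_{f^i_*\sigma_n}(\mathcal{P}_q)$, gives
$$\frac{q}{n}\log S(n,\epsilon,M)\ \le\ H_{\mu_n}(\mathcal{P}_q)+\frac{2q^2\log(\#\mathcal{P})}{n}.$$
Letting $n=n_j\to\infty$ and using continuity of $\nu\mapsto H_\nu(\mathcal{P}_q)$ at $\mu$ yields $q\,h(f,\epsilon)\le H_\mu(\mathcal{P}_q)$, hence $h(f,\epsilon)\le\frac1q H_\mu(\mathcal{P}_q)$; letting $q\to\infty$ gives $h(f,\epsilon)\le h_\mu(f,\mathcal{P})\le h_\mu(f)\le\sup_\nu h_\nu(f)$, and finally $\epsilon\to0$ produces $h(f)\le\sup_{\nu\in\mathcal{M}(f)}h_\nu(f)$. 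I expect the delicate points to be the bookkeeping in the blocking inequality (the error term $2q^2\log(\#\mathcal{P})$) and the logical order of fixing the partition \emph{after} the limit measure, so that the null-boundary hypothesis is legitimate and the displayed estimate holds for that specific $\mathcal{P}$ along $n_j$.

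\textbf{Ergodic refinement.} It remains to replace $\mathcal{M}(f)$ by $\mathcal{M}_e(f)$ in the supremum. As the reverse inclusion is trivial, I invoke the ergodic decomposition $\mu=\int\mu_\omega\,d\tau(\omega)$ with each $\mu_\omega$ ergodic and the affinity of the entropy functional, $h_\mu(f)=\int h_{\mu_\omega}(f)\,d\tau(\omega)$, which forces $h_\mu(f)\le\operatorname{ess\,sup}_\omega h_{\mu_\omega}(f)\le\sup_{\nu\in\mathcal{M}_e(f)}h_\nu(f)$. Taking the supremum over all invariant $\mu$ and combining with the two inequalities above completes the identification $h(f)=\sup_{\mu\in\mathcal{M}(f)}h_\mu(f)=\sup_{\mu\in\mathcal{M}_e(f)}h_\mu(f)$.
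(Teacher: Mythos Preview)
The paper does not supply a proof of this theorem; it is quoted as a classical result with a citation to \cite{Katok}, so there is no ``paper's own proof'' to compare against.

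Your outline is the standard Misiurewicz proof and is correct in all essential respects. The easy direction via compact approximants, the $\log 2$ defect removed by the power trick, the construction of $\mu$ as a weak-$*$ limit of averaged empirical measures on maximal separated sets, the choice of a partition with $\mu$-null boundaries \emph{after} $\mu$ is obtained, the blocking/concavity estimate, and the passage to ergodic measures via affinity of entropy under the ergodic decomposition are all the right ingredients in the right order. The two points you flag as delicate---the bookkeeping of the boundary term in the blocking inequality and the logical order of fixing $\mathcal{P}$ after $\mu$---are exactly where care is needed, and you have handled them correctly. One small remark: to make the null-boundary argument airtight you should also note that the atoms of $\mathcal{P}_q$ inherit $\mu$-null boundaries from those of $\mathcal{P}$ (a finite union of preimages of null sets under the continuous map $f$), so that $\nu\mapsto H_\nu(\mathcal{P}_q)$ is indeed continuous at $\mu$ for every $q$.
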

\vspace{0.1in}
\noindent \textit{Smooth Dynamics} 
\vspace{0.1in}

In this part, we recall some well-known concepts from the theory of $C^1$-diffeomorphisms that will be useful to our research. Let us begin by recalling the concept of hyperbolic sets which was coined in the seminal work of S. Smale \cite{SM}.

\begin{definition}
    We say that a compact and invariant subset $\Lambda\subset M$
 is hyperbolic if there are $C>0$, $0<\lambda<1$, and a decomposition $T\Lambda=E^s\oplus E^u$ such that:
 \begin{itemize}
     \item  $Df_x(E^s_x)\subset E^s_{f(x)}$ and $Df^{-1}_{f(x)}(E^u_{x})$, for every $x\in \Lambda$.
     \item $|| Df^n|_{E^s}||\leq C\lambda^{-n}$, for every $n\geq 0$.
     \item $|| Df^{n}|_{E^u}||\leq C\lambda^n$, for every $n\leq 0$.
 \end{itemize}
 
 \end{definition}

Recall that a compact and invariant set $\Lambda$ is locally maximal if there is an  isolating neighborhood $U$ of $\Lambda$, i.e.,  $f(\overline{U})\subset U $ and $$ \Lambda=\bigcap_{n\geq 0} f^n(U).$$

\begin{definition}
    A subset $\Lambda\subset M$ is a horseshoe if it is hyperbolic, locally maximal, totally
disconnected, contains a transitive point, and it is not a periodic orbit.  
\end{definition}

The most famous example of the horseshoe is the Smale's horseshoe, which is widely known as one remarkable example of a chaotic dynamical system. The next proposition is a summary of well-known properties of horseshoes, that will be used by us. The reader interested in its proof can refer to \cite{Katok}, which contains a brilliant exposition of classical concepts of the dynamical systems theory. 
\begin{proposition}[\cite{Katok}]\label{hsprop}
    If $\Lambda\subset M$ is a horseshoe for $f$, then it holds:
\begin{itemize}
    \item $f|_{\Lambda}$ is sensitive and not minimal.
    \item $f|_{\Lambda}$ has the shadowing property.
    \item $\Lambda$ is $C^1$-robust, i.e., there is an isolating neighboorhood $U$ of $\Lambda$ and a neighborhood $\mathcal{U}\subset \text{Diff}^1(M)$  of $f$, such that if $g\in \mathcal{U}$, then there is a horseshoe $\Lambda_g\subset U$ for $g$.  
\end{itemize}    
\end{proposition}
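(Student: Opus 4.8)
The plan is to treat the three bullets separately, drawing on the standard structure theory of locally maximal hyperbolic sets. Since $\Lambda$ is a horseshoe, I first record the two structural consequences I will reuse throughout: the local product structure (each $x\in\Lambda$ has a neighborhood in which $\Lambda$ is homeomorphic to $(W^u_{loc}(x)\cap\Lambda)\times(W^s_{loc}(x)\cap\Lambda)$), and the density of periodic orbits in $\Lambda$. The latter follows from the shadowing property together with the Anosov closing argument: the orbit of a transitive point returns arbitrarily close to any prescribed point, and shadowing a suitable periodic $\delta$-pseudo-orbit produces a genuine periodic point nearby.

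For the shadowing property I would prove the shadowing lemma for $f|_{\Lambda}$ directly from hyperbolicity. Given $\epsilon>0$ I choose $\delta>0$ and, for a $\delta$-pseudo-orbit $(x_n)\subset\Lambda$, construct the shadowing point as the unique intersection of a forward-contracting (stable) family and a backward-contracting (unstable) family; concretely one sets up the graph-transform contraction on sequences of local stable/unstable coordinates and uses the uniform hyperbolicity constants $C,\lambda$ to obtain a fixed point whose orbit $\epsilon$-shadows $(x_n)$. Because the pseudo-orbit lies in $\Lambda$ and $\epsilon$ is chosen smaller than the radius of the isolating neighborhood $U$, the shadowing orbit stays in $U$ and hence, by local maximality $\Lambda=\bigcap_{n\geq 0} f^n(U)$, lies in $\Lambda$.

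Non-minimality is then immediate: density of periodic points furnishes a periodic orbit $\mathcal{O}\subset\Lambda$, which is a compact proper $f$-invariant subset because $\Lambda$ is infinite (it is transitive and not a single periodic orbit), so $f|_{\Lambda}$ is not minimal. For sensitivity I would combine the three facts that $f|_{\Lambda}$ is transitive, has dense periodic points, and acts on an infinite space, and invoke the Banks--Brooks--Cairns--Davis--Stacey theorem (transitivity together with dense periodic points on an infinite space forces sensitivity). Alternatively one argues directly: by the local product structure and transitivity, $W^u_{loc}(x)\cap\Lambda$ is nontrivial at every $x\in\Lambda$, so each neighborhood of $x$ contains a point $y\neq x$ on its local unstable manifold; the uniform unstable expansion then yields $n>0$ with $d(f^n(x),f^n(y))>c$, where $c$ is a fixed fraction of the size of the local unstable manifolds.

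Finally, $C^1$-robustness is the persistence of locally maximal hyperbolic sets. Fixing an isolating neighborhood $U$ with $f(\overline{U})\subset U$, the adapted cone fields defining hyperbolicity are $C^1$-open conditions, so for $g$ in a $C^1$-neighborhood $\mathcal{U}$ of $f$ the set $\Lambda_g=\bigcap_{n\geq 0} g^n(U)$ is again hyperbolic and locally maximal, and structural stability provides a homeomorphism $h\colon\Lambda\to\Lambda_g$ with $h\circ f=g\circ h$. Since total disconnectedness, transitivity (existence of a dense orbit), and the property of not being a single periodic orbit are all invariants of topological conjugacy, $\Lambda_g$ is a horseshoe for $g$. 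I expect the main obstacle to be exactly this last step: verifying that the defining properties of a horseshoe, especially total disconnectedness and transitivity, are genuinely transported by $h$ and that the isolating condition and the cone estimates persist uniformly over $\mathcal{U}$, which is where one must appeal to the full strength of the structural stability of hyperbolic sets rather than to soft conjugacy-invariance alone.
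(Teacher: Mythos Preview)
The paper does not actually prove this proposition; it states it as a summary of well-known facts and refers the reader to \cite{Katok} for the arguments. Your outline follows precisely the standard route taken there (shadowing via the Anosov closing/graph-transform contraction, density of periodic points giving non-minimality, sensitivity from transitivity plus dense periodic points or directly from unstable expansion, and persistence of locally maximal hyperbolic sets via cone fields and structural stability), so there is nothing to compare beyond saying your plan is the expected one.

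One caveat worth flagging: you carry over the paper's definition of ``isolating neighborhood'' with the condition $f(\overline{U})\subset U$ and $\Lambda=\bigcap_{n\geq 0} f^n(U)$, but that describes an attracting set, not a horseshoe. For a genuine horseshoe no such forward-invariant $U$ exists; the correct notion is $\Lambda=\bigcap_{n\in\mathbb{Z}} f^n(U)$ with no invariance assumption on $U$. Your shadowing and robustness arguments are fine once you replace the one-sided intersection by the two-sided one; otherwise the step ``the shadowing orbit stays in $U$ and hence lies in $\Lambda$'' and the definition of $\Lambda_g$ would both be off.
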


\subsection{Smooth Flows}\label{singularsups}
We begin by recalling some basic facts from Flow's theory.
 
\begin{definition}
    A $C^1$-flow on $M$ is a $C^1$-map $\phi:\mathbb{R}\times M\to M$ satisfying:
    \begin{enumerate}
        \item $\phi(0,x)=x$, for every $x\in M$.
        \item $\phi(t+s,x)=\phi(t,\phi(s,x))$, for any $x\in M$ and any $t,s\in \mathbb{R}$.
    \end{enumerate}
    We denote by $\phi_t$ the time $t$ map $\phi(t,\cdot)$. 

\end{definition}
\begin{remark}
    A flow on $M$ can be seen as a family $\{\phi_t\}_{t\in \mathbb{R}}$ of $C^1$-diffeomorphisms such that $\phi_0=Id_M$ and $\phi_{t}\circ\phi_{s}=\phi_{t+s}$, for every $t,s\in \mathbb{R}$.
\end{remark}
An equivalent way of defining a $C^1$-flow is through vector fields. Indeed, if $X$ is a $C^1$-vector field over $M$, then $X$ induces on $M$ a flow whose velocity vector field is $X$. Conversely, to any $C^1$-flow  on $M$, one can assign a velocity vector field $X$. Thus, in this work, we will always denote by $X_{\phi}$  the velocity vector field that induces $\phi$. 
We say that a subset $K\subset M$ is \textit{$\phi$-invariant} if $\phi_t(K)=K$, for every $t\in \mathbb{R}$. A point $\sigma\in M$ is a \textit{singularity} for $\phi$ if $X_{\phi}(\sigma)=0$. We denote the singularities of $\phi$ by $Sing(\phi)$. A point $x\notin Sing(\phi)$ is said to be a \textit{regular point}. A regular point is \textit{periodic} for $\phi$ if, there is $\eta>0$ such that $\phi_{\eta}(x)=x$. We say that a flow is a \textit{regular flow} if its set of singularities is empty.

\begin{definition}
    The topological entropy of a flow $\phi$ is the topological  entropy of its time-one map, i.e., $$h(\phi)=h(\phi_1).$$ 
\end{definition}

Next, we shall discuss some properties of the suspension flows introduced in Section \ref{Intro}. The first main property is that two flows generated by suspending a diffeomorphism  $f$ through two distinct roof functions are equivalent. Let us precise this equivalence concept:

\begin{definition}
 Let $\phi$ and $\phi'$ be two flows on $M$ and $M'$, respectively. We say that $\phi$ and $\phi'$ are equivalent if there is a homeomorphism $h\colon M\to M' $ such that:
    \begin{enumerate}
    \item $h(O_{\phi}(x))=O_{\phi'}(h(x))$, for every $x\in M$. 
    \item  $h$ preserves the natural orientation induced on $O(x)$ by $\phi$, for every $x\in M$.
    \end{enumerate}
\end{definition}

In \cite{SV} it is proved that, for regular flows, the positiveness of topological entropy is preserved under equivalence. On the other hand, any pair of suspension flows of a diffeomorphism $f$ with distinct roofs $\rho$ and $\rho'$ are equivalent  (see \cite{BW}). Since our primary purpose in this work is to study the positiveness of entropy, to simplify our proof we will always consider suspensions whose roof functions are the constant function  $\rho\equiv 1$. In this case, given an invariant measure $\mu$ for $f$, we can   construct an  invariant measure $\overline{\mu}$ for $\phi$ by defining:
$$\int\xi d\overline{\mu}=\int\int_{0}^1\xi(\phi_t(\pi(\overline{x})))dtd\mu,   $$
for every $\xi\in C^0(\overline{M})$.

\begin{theorem}[\cite{SYZ}]
    Let $\phi$ be a suspension flow of $f$ and $\mu$ an invariant measure for $f$.
    Then $\overline{\mu}$ is ergodic if, and only if, $\mu$ is ergodic.  In addition, $h_{\mu}(f)=h_{\overline{\mu}}(\phi)$.
    \end{theorem}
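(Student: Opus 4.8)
The plan is to exploit the reduction made just above the statement: since we only consider roof $\rho\equiv 1$, the space $\overline M$ is canonically the measure space $M\times[0,1)$ carrying the product $\mu\times\operatorname{Leb}$ (this is precisely what the displayed formula for $\overline\mu$ encodes), and $\phi$ is the unit-speed vertical translation subject to $(x,1)\sim(f(x),0)$. Two observations will carry the whole argument. First, in these coordinates the time-one map is $\phi_1(x,s)=(f(x),s)$; that is, under the canonical identification $(\overline M,\overline\mu,\phi_1)$ \emph{is} the product system $(M,\mu,f)\times([0,1),\operatorname{Leb},\operatorname{Id})$. Second, a measurable $B\subset\overline M$ is $\phi$-invariant iff it agrees, modulo $\overline\mu$-null sets, with a cylinder $\widehat A=A\times[0,1)$ for some $f$-invariant $A\subset M$, and then $\overline\mu(\widehat A)=\mu(A)$.

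For the ergodicity equivalence I would first record the easy implication: if $A$ is $f$-invariant with $0<\mu(A)<1$, then $\widehat A=A\times[0,1)$ is genuinely $\phi$-invariant (it is invariant under vertical translation, and a roof-crossing keeps us inside since $f(A)=A$, hence $f^n(A)=A$ for all $n$), while $\overline\mu(\widehat A)=\mu(A)\in(0,1)$; so $\overline\mu$ is not ergodic. For the converse, take $\phi$-invariant $g\in L^1(\overline\mu)$: invariance under $\phi_t$ (for $t$ ranging over a countable dense set, so that the translation structure in the fiber is seen) forces $s\mapsto g(x,s)$ to be $\operatorname{Leb}$-a.e. constant, so $g$ descends to $g_0\in L^1(\mu)$, $g_0(x)=g(x,0)$; crossing one roof gives $g_0\circ f=g_0$ $\mu$-a.e. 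If $\mu$ is ergodic then $g_0$, hence $g$, is constant, so $\overline\mu$ is ergodic. Working with invariant $L^1$ functions rather than invariant sets is convenient because the mod-$0$ manipulations become transparent via Fubini. Combining the two implications yields the equivalence.

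For the entropy identity I would use the first observation together with the definition $h_{\overline\mu}(\phi)=h_{\overline\mu}(\phi_1)$: we get $h_{\overline\mu}(\phi)=h_{\mu\times\operatorname{Leb}}(f\times\operatorname{Id}_{[0,1)})$, and by the product formula for metric entropy $h_{\mu\times\nu}(T\times S)=h_\mu(T)+h_\nu(S)$, combined with the fact that the identity map has zero entropy for every measure, the right-hand side is $h_\mu(f)+0=h_\mu(f)$. (This is just Abramov's formula $h_{\overline\mu}(\phi)=h_\mu(f)/\!\int\rho\,d\mu$ specialized to $\int\rho\,d\mu=1$.)

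The result is essentially classical, so there is no deep obstacle; the care lies entirely in bookkeeping. One point is to make the identification $(\overline M,\overline\mu,\phi_1)\cong(M\times[0,1),\mu\times\operatorname{Leb},f\times\operatorname{Id})$ precise as measure-preserving systems — a measurable bijection with measurable inverse pushing one measure onto the other and intertwining the dynamics. The subtler point, easily overlooked, is that ergodicity of the \emph{flow} $\phi$ is not the same as ergodicity of $\phi_1$: indeed $\phi_1=f\times\operatorname{Id}$ is never ergodic here because of the $[0,1)$-factor, so the ergodicity half must be proved by the cylinder/flow-invariance argument above and cannot be read off from the product structure used for the entropy computation. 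The remaining routine step is the a.e.-versus-everywhere issue in "$B$ $\phi$-invariant $\Rightarrow$ $B$ is a cylinder mod $0$'', handled either by passing to a strictly invariant representative of $B$ or, as above, by arguing directly with invariant $L^1$ functions and Fubini.
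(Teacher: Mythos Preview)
The paper does not give its own proof of this statement: it is stated as a citation from \cite{SYZ} and used as a black box. So there is nothing in the paper to compare your argument against line by line.

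Your proposal is a correct and standard proof of this classical fact. The identification $\phi_1(x,s)=(f(x),s)$ under $\rho\equiv 1$ is right, and the product formula for metric entropy then gives $h_{\overline\mu}(\phi)=h_\mu(f)$ immediately; this is indeed Abramov's formula in the trivial case $\int\rho\,d\mu=1$. Your handling of the ergodicity equivalence is also sound, and you correctly flag the trap that ergodicity of the flow is \emph{not} the same as ergodicity of $\phi_1$ (the latter is never ergodic because of the identity factor on $[0,1)$), so the cylinder/invariant-function argument is genuinely needed there. Nothing is missing.
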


\subsection{Singular Suspensions}

The process of suspension described earlier gives rise to a regular flow. Next, we will perform an extra process to obtain singular versions of suspension flows. Let $f: M\to M$ be a $C^1$-diffeomophism, and let $\phi$ be the suspension flow of $f$. Fix a compact set $S\subset \overline{M}$ with zero Lebesgue measure. Let $$\alpha:\overline{M}\to [0,+\infty)$$ be a smooth function such that $\alpha(x)=0$, if and only if $x\in S$. In this way, it is clear that $\alpha X$ is a $C^1$ vector field over $\overline{M}$. 

\begin{definition}
    The singular suspension of $f$ with roof $\rho$ and brake $\alpha$  is the flow $\psi:\mathbb{R}\times \overline{M}\to \overline{M}$ induced by the vector field $\alpha X$.
\end{definition}

 We would like to remark the following immediate consequence from the previous  definition:

\begin{enumerate}
    \item $Sing(\psi)=S$
    \item Any point  contained in $$\overline{M}\setminus\bigcup_{t\in \mathbb{R}}\phi_t(S)$$ keeps the same orbit under $\psi$ as for $\phi$.
    \item For any regular point $$x\in \bigcup_{t\in \mathbb{R}}\phi_t(S),$$  there is $\sigma_{x}\in S$ such that either  $$\lim\limits_{t\to \infty}\psi_{t}(x)=\sigma_x \textrm{ or } \lim\limits_{t\to -\infty}\psi_{t}(x)=\sigma_x.$$  
    
\end{enumerate} 

Let us go further and see how the function $\alpha$ modifies the dynamics of $\phi$ in terms of 
time-change. The reader interested in the proof of the previous facts is recommended to refer to \cite{To}. Let  $m$ denote the Lebesgue measure of $\overline{M}$.  Since $\alpha$ is continuous, non-negative, bounded and $m(S)=0$ we have $$\int \alpha dm>0. $$ We then define the following function:
$$\theta(t,\overline{x}):=\int_0^t \alpha(\phi_s(\overline{x}))ds,$$
where $\overline{x}\in M\times \{0\}$. The flow $\psi$  induced by $\alpha X$ satisfies the following relation:
$$\psi_t(\overline{x})=\phi_{\tau(t,\overline{x})}(\overline{x}),$$
where  $$\tau(t,\overline{x})=\sup\{s\geq0; \theta(s,\overline{x})\leq t\}.$$ Recall that the fiber $M\times\{0\}\subset \overline{M}$ is naturally identified with $M$. Let $$\pi:\overline{M}\to M\times\{0\}$$ denote the natural projection on the first coordinate.  Next, define a function $\gamma: M\to [0,+\infty] $ by :

$$\begin{cases} 
\gamma(x)=\infty, \textrm{ if  }  x\in \pi(S)\\ 
    \psi_{\gamma(x)}((x,0))=\phi_1((x,0)), \textrm{ otherwise}. 
\end{cases}$$

The idea behind the function $\gamma$ is quite simple. It gives us the first time such that a point $\overline{x}=(x,0)$ hits $(f(x),0)$ for any $\overline{x}$ which is not contained in a  vertical segment containing a singularity. Since $\psi$ is smooth, then $\gamma$  is clearly continuous and $$\gamma(x_n)\to \infty, \textrm{ when } x_n\to x\in \pi(S).$$ This function will be crucial to obtain our results. We end this section by setting another important notation. Namely, let us denote 
$$A_{Sing}=\pi\left (\bigcup_{t\in \mathbb{R}}\phi_t(S)\right).$$
Observe that $A_{Sing}$ is formed all the points in $x\in M$ such that there is $n\in \mathbb{Z}$ satisfying $(f^n(x),0)$ is contained in a vertical segment containing a singularity of $\psi$.

\section{Entropy of Singular suspensions}\label{characsection}

In this section, we begin the study of the entropy of singular suspensions. Our first result tells us that it is pretty easy to destroy the topological entropy of a suspension if one is allowed to insert a ``big set" of zeros for a break $\alpha$. 

\begin{rmk}\label{uncountable}
     Notice that for any diffeomorphism $f$ with positive entropy, there exists a brake $\alpha$ such that the suspension flow $\psi$  has zero topological entropy. Indeed,    Let $f: M\to M$ be diffeomorphism with positive topological entropy. Let $\phi$ be the suspension flow of $f$ on  $\overline{M}$. Fix the fiber $S=M\times \{\frac{1}{2}\}$. Thus, $S$ is a compact set satisfying $m(S)=0$. Now, take any smooth function $\alpha:M\to [0,\infty)$ such that $\alpha(x)=0$ if, and only if $x\in S$. Consider the flow $\psi$ generated by $\alpha X$. By construction, for  any regular point $\overline{x}\in \overline{M}$ there are $\sigma_+,\sigma_-\in S$ such that 
         $$\psi_t(\overline{x})\to\sigma_+  \,\,\ \textrm{and } \,\,\  \psi_{-t}(\overline{x})\to \sigma_-,$$
         when $t\to \infty$. Therefore, the non-wandering set of $\psi$ is formed by $S$ and therefore  $h(\psi)=0$ (see \cite{Katok}).  
\end{rmk}

From the previous theorem, we can see that we need to impose some restrictions on the set $S$  to investigate the topological entropy of singular suspension flows. In \cite{SYZ}, it was exhibited a singular suspension flow with a single singularity, zero topological entropy, but whose basis is a minimal diffeomorphism with has positive topological entropy. We will see that this process is uncommon, even in cases where $S$ is infinite. 

The next result gives us a full characterization of suspensions with zero topological entropy in terms of the function $\gamma$ defined in Subsection \ref{singularsups}. To simplify notation, for every Borelian measure $\mu$ on $M$ and measurable function $\xi:M\to \mathbb{R}$, denote
$$E_{\mu}(\xi)=\int \xi d\mu . $$

\begin{theorem}\label{equiv}
    Let $f: M\to M$ be a diffeomorphism with positive topological entropy and let $\psi$ be a singular suspension of $f$ with a break  $\alpha$. Then $h(\psi)=0$ if, and only if, for every ergodic measure for $f$ satisfying $h_{\mu}(f)>0$, we have $E_{\mu}( \gamma) =+\infty$.  
\end{theorem}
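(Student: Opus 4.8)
The plan is to relate the topological entropy of the singular suspension $\psi$ to the metric entropies of $f$-invariant measures via the variational principle, using the time-change formula $\psi_t(\overline x)=\phi_{\tau(t,\overline x)}(\overline x)$ and the Abramov formula. First I would recall that $h(\psi)=\sup\{h_\nu(\psi):\nu\in\mathcal M(\psi)\}$, and that every $\psi$-invariant probability measure $\nu$ either is supported on $Sing(\psi)=S$ (hence contributes zero entropy, since the entropy of a fixed point is $0$) or gives full measure to the regular set. For a regular ergodic $\nu$, the orbit equivalence between $\psi$ and $\phi$ on $\overline M\setminus\bigcup_t\phi_t(S)$, together with the fact that $\psi$ is a (possibly singular) time-change of $\phi$, means that $\nu$ is obtained from an $f$-invariant ergodic measure $\mu$ by the standard suspension construction \emph{reparametrized by $\alpha$}: concretely, pushing $\nu$ through $\pi$ along $\psi$-orbits yields an $f$-invariant ergodic $\mu$ on $M$, and the return-time function of $\psi$ to the fiber $M\times\{0\}$ is exactly $\gamma$. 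The Abramov/Ambrose--Kakutani formula then gives
$$h_\nu(\psi)=\frac{h_\mu(f)}{E_\mu(\gamma)},$$
with the convention that the right-hand side is $0$ when $E_\mu(\gamma)=+\infty$. Granting this identity, $h(\psi)>0$ iff there exists an ergodic $\mu\in\mathcal M_e(f)$ with $h_\mu(f)>0$ and $E_\mu(\gamma)<+\infty$; contrapositively $h(\psi)=0$ iff every ergodic $\mu$ with $h_\mu(f)>0$ has $E_\mu(\gamma)=+\infty$, which is the claim. (Here $h(f)>0$ guarantees, by the variational principle and ergodic decomposition, that ergodic measures of positive entropy exist.)

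The main technical point — and the step I expect to be the genuine obstacle — is justifying the Abramov formula in this degenerate setting, where $\gamma$ is unbounded (indeed $\gamma=+\infty$ on $A_{Sing}$) and the time-change $\theta(t,\overline x)=\int_0^t\alpha(\phi_s(\overline x))\,ds$ degenerates near $S$. The classical Abramov theorem is usually stated for integrable roof functions bounded away from $0$. To handle this I would proceed as follows. Since $\mu$ is ergodic with $h_\mu(f)>0$, it is not the point mass at any single orbit and, crucially, $\mu(A_{Sing})$ must be considered: if $\mu(A_{Sing})>0$ then by invariance and ergodicity $\mu(A_{Sing})=1$, but $A_{Sing}=\pi(\bigcup_t\phi_t(S))$ is a countable union (over $n\in\mathbb Z$) of slices of the compact set $S$; when $S$ (equivalently $Sing(\psi)$) is countable this forces $\mu$ to be atomic, contradicting $h_\mu(f)>0$. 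Hence for every positive-entropy ergodic $\mu$ we have $\mu(A_{Sing})=0$, so $\gamma<+\infty$ $\mu$-a.e. and $E_\mu(\gamma)=\int_M\gamma\,d\mu\in(0,+\infty]$ is well-defined as an element of $(0,+\infty]$. [If the theorem is intended without the countability hypothesis, one instead argues directly that $\mu(A_{Sing})=0$ or deals with the $\mu(A_{Sing})=1$ case separately, where $E_\mu(\gamma)=+\infty$ trivially and one must check $h_\nu(\psi)=0$ by hand.] With $\gamma$ finite a.e. and positive, I would then invoke the Ambrose--Kakutani representation: $\psi$ restricted to the $\nu$-full-measure invariant set is, up to measurable isomorphism, the suspension of $(f,\mu)$ under the roof $\gamma$, and Abramov's theorem for non-negative measurable roofs — which holds whenever the roof is a.e. finite and positive, interpreting $h/\!\!\int\gamma$ as $0$ when $\int\gamma=\infty$ — yields the displayed formula. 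The $\int\gamma=\infty$ case of Abramov (entropy zero) can be proved by truncation: let $\gamma_N=\min(\gamma,N)$ and compare the suspension under $\gamma$ with the one under $\gamma_N$, noting entropy is monotone under "speeding up", and let $N\to\infty$.

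Two ancillary points need care. First, continuity and measurability of $\gamma$: this is already recorded in the paper (Subsection \ref{singularsups}), where $\gamma$ is shown continuous on $M\setminus\pi(S)$ with $\gamma(x_n)\to\infty$ as $x_n\to\pi(S)$, so $\gamma$ is Borel on $M$. Second, I must make sure that \emph{every} $\psi$-invariant ergodic measure on the regular set really descends to an $f$-invariant measure on $M$ and conversely — this is where the identification of the fiber $M\times\{0\}$ with $M$ and the projection $\pi$ from Subsection \ref{singularsups} is used: the fiber is a global cross-section for $\psi$ away from $A_{Sing}$, the return map is $f$ (by definition of $\gamma$ via $\psi_{\gamma(x)}((x,0))=\phi_1((x,0))=(f(x),0)$), and the return time is $\gamma$, so the first-return/suspension correspondence between $\mathcal M_e(\psi)$-measures on the regular set and $\mathcal M_e(f)$-measures on $M\setminus A_{Sing}$ is the classical Kac/cross-section bijection. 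Assembling: $h(\psi)=\sup_{\mu}h_\mu(f)/E_\mu(\gamma)$ over ergodic $\mu$, and this supremum is $0$ exactly when no ergodic $\mu$ simultaneously has $h_\mu(f)>0$ and $E_\mu(\gamma)<\infty$, proving the theorem. $\hfill\diamondsuit$
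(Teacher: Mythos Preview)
Your proposal is correct and follows essentially the same route as the paper's proof: both directions hinge on the Abramov-type identity $h_{\overline\mu}(\psi)=h_\mu(f)/E_\mu(\gamma)$ (with the $0$ convention when $E_\mu(\gamma)=+\infty$) together with the bijection between $\psi$-ergodic measures on the regular set and $f$-ergodic measures via the cross-section $M\times\{0\}$ with return time $\gamma$. The only real difference is packaging: where you invoke the classical Ambrose--Kakutani representation and Abramov's theorem (handling the unbounded roof by truncation), the paper instead cites the ready-made statements from \cite{SYZ}---their Lemma~2.11 for the measure correspondence, Theorem~2.9 for the entropy formula, and Corollary~2.12 for the degenerate $E_\mu(\gamma)=+\infty$ case---and then verifies invariance and ergodicity of the lifted measure $\overline\mu$ by hand in the converse direction. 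Your observation that the argument silently uses $\mu(A_{Sing})=0$ (hence implicitly countability of $S$, or some substitute) is accurate: the paper's own proof invokes exactly this in the converse step (``Since we suppose $S$ is countable\ldots''), so you have correctly located the one place where a hypothesis beyond the bare statement is being used.
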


\begin{proof}
   First, suppose    $E_{\mu}( \gamma)=+\infty$ for every ergodic measure of $f$ with positive entropy. By the variational principle, to show that $h(\psi)=0$ we just need to show that any non-atomic ergodic measure $\overline{\mu}$ for $\psi$ satisfies $h_{\overline{\mu}}(\psi)=0$. Let $\overline{\mu}$ be an ergodic and non-atomic measure for $\psi$. By in \cite[  Lemma 2.11]{SYZ}, there is a unique non-atomic ergodic measure $\mu$ for $f$ such that:    
    $$\int \xi d\overline{\mu}=\frac{1}{E_{\mu}( \gamma)}\int \int_0^{\gamma(\pi(\overline{x}))}\xi(\psi_t((\pi(\overline{x}))dt d\mu,  $$
    for every measurable function  $\xi$.
    Now, we split the proof into two cases:\\
    \textbf{Case 1:} Suppose $E_{\mu}(\gamma)=+\infty$. Then,  by following the steps of the proof of   \cite[Corollary 2.12]{SYZ} one can conclude that $\overline{\mu}$  is atomic, a contradiction.\\
    \textbf{Case 2:} Suppose $E_{\mu}(\gamma)<+ \infty$. In particular, by assumption, we have $h_{\mu}(f)=0$.
     Now, observe:
     $$\overline{\mu}(B)=\frac{1}{E_{\mu}( \gamma)}\int_{B'} \int_0^{\gamma(\pi(\overline{x}))}1(\psi_t(\pi(\overline{x}))dt d\mu=\frac{1}{E_{\mu}( \gamma)}\int_{B'}\int_0^{1}\gamma(\pi(\overline{x}))dt d\mu=\int_{B'} \gamma d\overline{\mu},$$
     where $B'=\pi(B)$. Then,  \cite[Theorem 2.9]{SYZ} implies $h_{\overline{\mu}}(\psi)=0$.

    Conversely, suppose there is an ergodic measure $\mu$ for $f$ such that $h_{\mu}(f)>0$ and $E_{\mu}( \gamma)<+\infty$. 
    Define a measure $\overline{\mu}$ on $\overline{M}$ by
    $$\int \xi d\overline{\mu}=\frac{1}{E_{\mu}( \gamma)}\int \int_0^{\gamma(\pi(\overline{x}))}\xi(\psi_t((\pi(\overline{x}))dt d\mu,  $$
    for every measurable function $\xi$. If we can prove that $\overline{\mu}$ is an ergodic and invariant measure for $\psi$, then  \cite[Theorem 2.9]{SYZ} will imply $h(\psi)>0$.
    
    Thus, we finish this proof showing that $\overline{\mu}$ is an ergodic invariant measure. To see that $\overline{\mu}$ is a probability, notice 
  $$\overline{\mu}(\overline{M})=\frac{1}{E_{\mu}(\gamma)}\int \int_0^{\gamma(\pi(\overline{x}))} 1 dtd\mu=  1.$$ 
  
  Next, we check that $\overline{\mu}$ if $\psi$-invariant. Since the proof for negative time is completely analogous, we will prove the invariance of $\overline{\mu}$ only for positive time. Fix $s>0$. Since we suppose $S$ is countable, then $A_{Sing}$ is countable. On the other hand, $h_{\mu}(f)>0$ implies $\mu$ is non-atomic. Therefore, $\mu(A_{Sing})=0$. We thus conclude $$\overline{\mu}(\mathcal{O}_{\psi}(A_{Sing}))=0,$$ 
  where $\mathcal{O}_{\psi}(K)$ denotes the orbit of a subset of $K\subset \overline{M}$ under $\psi$.
  
  For each $x\in M\setminus A_{sing} $, we denote by $I_x$ the vertical segment ${\psi}_{[0,\gamma(\pi(x)))}(x)$.  Fix a measurable set $A\subset M$. For each $k>0$ denote: 
  $$A_k=\{x\in A\setminus \mathcal{O}_{\psi}(A_{Sing}); \psi_s(x)\in I_{f^k(\pi(x))} \}.$$
Notice that the sets $A_k$ are measurable and pairwise disjoint.  In particular,  $$\overline{\mu}(\psi_{s}(A))=\overline{\mu}({\phi}_{s}(A)\setminus \mathcal{O}_{\psi}(A_{Sing}))=\Sigma_{i=1}^{\infty} \overline{\mu}(\psi_{s}(A_i).$$
But, $$\overline{\mu}(A_i)=\frac{1}{E_{\mu}(\gamma)}\int_{\pi(A_i)} \int_0^{\gamma(\pi(\overline{x}))} 1(\psi_t(\pi(\overline{x}))) dtd\mu.$$
Therefore the invariance of $\mu$ implies $$\overline{\mu}(A_i)=\frac{1}{E_{\mu}(\gamma)}\int_{f^i(\pi(A_i))} \int_0^{\gamma(\pi(\psi_s(\overline{x})))} 1(\pi(\psi_s(\overline{x}))) dtd\mu=\overline{\mu}(\psi_s(A_i)).$$  
We then conclude 
$$\overline{\mu}(A)=\overline{\mu}(A\setminus \mathcal{O}_{\psi}(A_{Sing}))=\sum_{i=1}^{\infty } \overline{\mu}(A_i)=\overline{\mu}(\psi_s(A)).$$
  
Finally, to see that $\mu$ ergodic  we consider $A\subset \overline{M} $  a $\psi$-invariant  set, i.e., $\psi_t(A)=A$ for every $t\in \mathbb{R}$. It is easy to check that the set $\pi(A)\setminus A_{Sing}$ is $f$-invariat. 
Now the ergodicity of $\mu$ implies $\mu(\pi(A)\setminus A_{Sing}))$ is either equal to $0$ or $1$.  Since  $\overline{\mu}(A\setminus O_{\phi}(A_{Sing}))=\overline{\mu}(A)$, we obtain by definition that $\overline{\mu}(A)$ is either $0$ or $1$ and the proof is complete.

 \end{proof}

The next result is contained in the proof Theorem \ref{equiv}. We decided to state it explicitly to recall it when necessary in the remainder of this work. \\

\begin{lemma}\label{zeroentr}
    Let $\psi$ be a singular suspension of $f$. If $h(f)=0$, then $h(\psi)=0$. 
\end{lemma}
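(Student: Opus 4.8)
\textbf{Proof proposal for Lemma \ref{zeroentr}.}

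The plan is to reduce the statement to the characterization already contained in Theorem \ref{equiv}, together with a direct variational-principle argument for the degenerate case. Note first that if $h(f)=0$, then there is no ergodic measure $\mu$ for $f$ with $h_\mu(f)>0$; hence the hypothesis ``$E_\mu(\gamma)=+\infty$ for every ergodic $\mu$ with $h_\mu(f)>0$'' in Theorem \ref{equiv} is vacuously satisfied. However, Theorem \ref{equiv} is stated under the standing assumption that $f$ has positive topological entropy, so I cannot invoke it verbatim; instead I will redo the (now much simpler) forward direction of its proof.

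So the concrete plan is as follows. First I would fix an ergodic, non-atomic invariant measure $\overline\mu$ for $\psi$; by the variational principle it suffices to show $h_{\overline\mu}(\psi)=0$ for every such measure, since atomic ergodic measures are supported on singularities and carry zero entropy. By \cite[Lemma 2.11]{SYZ} there is a unique non-atomic ergodic measure $\mu$ for $f$ with
$$\int \xi\, d\overline{\mu}=\frac{1}{E_{\mu}(\gamma)}\int \int_0^{\gamma(\pi(\overline{x}))}\xi(\psi_t(\pi(\overline{x})))\,dt\, d\mu .$$
In particular this forces $E_\mu(\gamma)<+\infty$ (otherwise, following \cite[Corollary 2.12]{SYZ} as in Case 1 of the proof of Theorem \ref{equiv}, $\overline\mu$ would be atomic, contradicting our choice). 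Now $h(f)=0$ gives $h_\mu(f)=0$, and exactly the computation in Case 2 of the proof of Theorem \ref{equiv} shows $\overline\mu(B)=\int_{B'}\gamma\, d\overline\mu$ with $B'=\pi(B)$, so that \cite[Theorem 2.9]{SYZ} yields $h_{\overline\mu}(\psi)=0$. Taking the supremum over all ergodic $\overline\mu$ and applying the variational principle gives $h(\psi)=0$.

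The only mild subtlety — and the step I would be most careful about — is the passage through \cite[Lemma 2.11]{SYZ} and \cite[Theorem 2.9]{SYZ}: one must check these do not secretly use positivity of $h(f)$, only the structural relation between $\overline\mu$, $\mu$ and the return-time function $\gamma$, which is the case. Everything else is a transcription of the ``$\Leftarrow$'' direction of Theorem \ref{equiv} with the positive-entropy ergodic components simply absent. An alternative, even shorter route would be to quote the equivalence of $\psi$ with the (non-singular) suspension flow away from $\mathcal O_\psi(S)$ and invoke \cite{Ohn}, but the measure-theoretic argument above is self-contained and already in hand, so I would present that one.
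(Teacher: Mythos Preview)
Your proposal is correct and is exactly the approach the paper intends: the paper does not give a separate proof of Lemma~\ref{zeroentr} but simply remarks that it ``is contained in the proof of Theorem~\ref{equiv}'', and what you have written is precisely the forward direction of that proof, specialized to the situation where no ergodic measure for $f$ has positive entropy. Your care in noting that Theorem~\ref{equiv} is stated under $h(f)>0$ and then re-running Cases~1 and~2 directly is a faithful extraction of the relevant argument.
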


Under the light of the previous results, we remark that to prove Theorems \ref{generic} and \ref{3d} we just need to show that if $f$ has positive entropy, then $\psi$ has positive entropy.
In addition, as a corollary of the proof of Theorem \ref{equiv}, we obtain a useful criterion for positive entropy of singular suspensions, even with uncountable many singularities.

\begin{theorem}\label{extraresult}
    Let $f$ be a diffeomorphism on $M$ and $\psi$ be a singular suspension of $f$. Suppose there is a non-atomic ergodic measure $\mu$ such that $h_{\mu}(f)>0$, and $Supp(\mu)\cap A_{Sing}=\emptyset$. Then $h(\psi)>0$. 
\end{theorem}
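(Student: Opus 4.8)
The plan is to mimic the relevant portion of the proof of Theorem~\ref{equiv}, since the hypotheses here are precisely what is needed to push that argument through without countability of $S$. Recall that in Theorem~\ref{equiv} the countability of $S$ was used in exactly one place: to guarantee that $A_{Sing}$ is countable, hence $\mu(A_{Sing})=0$ for the non-atomic measure $\mu$, and therefore $\overline{\mu}(\mathcal{O}_{\psi}(A_{Sing}))=0$. Here that fact is replaced directly by the hypothesis $\mathrm{Supp}(\mu)\cap A_{Sing}=\emptyset$, which immediately gives $\mu(A_{Sing})=0$ (in fact $\mu$ is supported away from $A_{Sing}$), and we proceed as before.

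Concretely, I would first define the candidate measure $\overline{\mu}$ on $\overline{M}$ by
\[
\int \xi\, d\overline{\mu}=\frac{1}{E_{\mu}(\gamma)}\int\int_0^{\gamma(\pi(\overline{x}))}\xi(\psi_t(\pi(\overline{x})))\,dt\, d\mu
\]
for every measurable $\xi$. One must first check that this makes sense, i.e.\ that $E_{\mu}(\gamma)<+\infty$: this is where $\mathrm{Supp}(\mu)\cap A_{Sing}=\emptyset$ enters crucially, because $\gamma$ is continuous and finite precisely off $\pi(S)\subset A_{Sing}$, so $\gamma$ restricted to the compact set $\mathrm{Supp}(\mu)$ is a continuous real-valued function, hence bounded, giving $E_{\mu}(\gamma)<+\infty$. (In particular $\overline{\mu}$ is a genuine probability measure, by the same one-line computation $\overline{\mu}(\overline{M})=\tfrac{1}{E_\mu(\gamma)}\int\int_0^{\gamma}1\,dt\,d\mu=1$ as in the proof of Theorem~\ref{equiv}.)

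Next I would verify that $\overline{\mu}$ is $\psi$-invariant and ergodic, repeating verbatim the second half of the proof of Theorem~\ref{equiv}: decompose an arbitrary measurable $A\subset \overline{M}$ into the pieces $A_k=\{x\in A\setminus\mathcal{O}_{\psi}(A_{Sing}):\psi_s(x)\in I_{f^k(\pi(x))}\}$, use $\overline{\mu}(\mathcal{O}_{\psi}(A_{Sing}))=0$ (which now follows from $\mu(A_{Sing})=0$), and invoke $f$-invariance of $\mu$ on each piece to get $\overline{\mu}(\psi_s(A_k))=\overline{\mu}(A_k)$, then sum; ergodicity follows because $\pi(A)\setminus A_{Sing}$ is $f$-invariant and $\mu$ is ergodic, while $\overline{\mu}(A\setminus\mathcal{O}_{\psi}(A_{Sing}))=\overline{\mu}(A)$. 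Finally, since $h_{\mu}(f)>0$ and $E_{\mu}(\gamma)<+\infty$, the entropy formula \cite[Theorem 2.9]{SYZ} applied to $\overline{\mu}$ (with the density $\gamma/E_\mu(\gamma)$) yields $h_{\overline{\mu}}(\psi)>0$, hence $h(\psi)>0$ by the variational principle. The only genuinely new point compared to Theorem~\ref{equiv} — and the one I would be most careful about — is the passage from ``$\mathrm{Supp}(\mu)$ avoids $A_{Sing}$'' to ``$E_{\mu}(\gamma)<+\infty$'': one should confirm that $\gamma$ is continuous and finite on all of $M\setminus \pi(S)$ (granted, from Subsection~\ref{singularsups}, where it is observed that $\gamma$ is continuous with $\gamma(x_n)\to\infty$ as $x_n\to\pi(S)$), and that $A_{Sing}\supset\pi(S)$ so that avoiding $A_{Sing}$ indeed keeps $\mathrm{Supp}(\mu)$ inside the region where $\gamma$ is finite and continuous, hence bounded on the compact support.
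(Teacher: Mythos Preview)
Your proposal is correct and follows essentially the same approach as the paper: the paper's proof simply refers back to the argument of Theorem~\ref{equiv}, noting the two key observations that $\mu(A_{Sing})=0$ and that $E_{\mu}(\gamma)<+\infty$ (the latter because $\gamma$ is continuous on $M\setminus A_{Sing}$), which are exactly the points you isolate and justify.
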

\begin{proof}
    The result is derived by following the arguments of the proof of Theorem \ref{equiv} and observing:
    \begin{enumerate}
    
    \item  $\mu(A_{Sing})=0$.
    \item  $E_{\mu}(\gamma)<+\infty$,  since $\gamma$ is continuous on $M\setminus A_{Sing}$. 
    \end{enumerate}
\end{proof}

\section{Proof of Theorems \ref{generic}, \ref{generic2} and \ref{3d}.}\label{secproofabc}

In this section we give the proofs for Theorems \ref{generic}, \ref{generic2} and \ref{3d}. Let us begin by collecting some ingredients for our proofs. The first one the following result:

\begin{theorem}[Theorem 5.4 in \cite{MO}]\label{potencia}
Let $K$ be a metric space and $f: K\to K$ be a non-wandering expansive homeomorphism with the shadowing property. For every non-empty open $U\subset K$, there are $n>0$ and a compact and $f^n$-invariant subset $K_U\subset U$ such that $f^n|_{K_U}$ is topologically conjugated to the full shift map of two symbols $(\Sigma,\sigma)$.
\end{theorem}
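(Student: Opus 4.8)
The plan is to reduce to the topologically mixing case via the spectral decomposition of expansive homeomorphisms with shadowing, and then to build the two‑shift factor by an explicit coding in which the shadowing map sends bi‑infinite words over $\{0,1\}$ to points of $K$, with continuity and injectivity of this coding supplied by expansiveness. (Strictly speaking the conclusion forces $K$, or at least $U\cap K$, to be infinite: a single periodic orbit satisfies all the hypotheses but contains no such $K_U$. This is harmless here, since in every application $K$ is a perfect set, so $U\cap K$ is infinite; I assume this below.)

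First I would run the reduction. Since $f$ is non‑wandering and has shadowing, $K=\Omega(f)$ splits into finitely many clopen, $f$‑invariant, chain‑transitive pieces; shadowing upgrades chain transitivity to topological transitivity, and each transitive piece has a cyclic decomposition into clopen sets $E_0,\dots,E_{l-1}$ permuted cyclically by $f$, with $g:=f^{l}$ topologically mixing on each $E_j$. The open set $U$ meets some $E_j$ in a nonempty set $V:=U\cap E_j$ open in $E_j$; as $E_j$ is clopen, $g|_{E_j}$ is still expansive (constant $c>0$), has shadowing, and, being non‑wandering, has dense periodic points. So it suffices to produce $N>0$ and a compact $g^{N}$‑invariant $K_0\subseteq V$ with $g^{N}|_{K_0}$ conjugate to the full $2$‑shift; then $n=lN$, $K_U=K_0$ finish the theorem. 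I record that here is where the infiniteness of $K$ is used: $E_j$ must be infinite, so $\mathcal O(p)$ below is a proper closed subset.

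Next comes the horseshoe construction. Shrinking $c$, fix a periodic point $p\in V$ of period $q$ and $\epsilon>0$ with $3\epsilon<c$ and $\overline{B(p,2\epsilon)}\cap E_j\subseteq V$; let $\delta\in(0,\epsilon]$ be a shadowing constant for $\epsilon$, and pick an open $W$ with $d(W,\mathcal O(p))>2c$. Using that $g$ is mixing, chain three transitions $B(p,\delta/2)\rightsquigarrow W\rightsquigarrow B(p,\delta/2)$ to obtain $N\equiv0\pmod q$ and a point $z$ with $z,g^{N}(z)\in B(p,\delta/2)$ and $d(g^{i^\ast}(z),\mathcal O(p))>2c$ for some $0\le i^\ast<N$. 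For a word $a=(a_i)_{i\in\mathbb Z}\in\{0,1\}^{\mathbb Z}$ build a bi‑infinite $\delta$‑pseudo‑orbit by concatenating length‑$N$ blocks: the $i$‑th block follows the $q$‑periodic (hence self‑closing) orbit of $p$ if $a_i=0$, and follows $z,g(z),\dots,g^{N-1}(z)$ if $a_i=1$; all junctions are $\delta$‑small by the choice of $z$. Let $x(a)$ be an $\epsilon$‑shadowing point; as $2\epsilon<c$, expansiveness makes it unique. Since position $0$ of every such pseudo‑orbit lies in $\overline{B(p,\delta)}$, we get $x(a)\in\overline{B(p,2\epsilon)}\cap E_j\subseteq V$, and $g^{N}(x(a))$ $\epsilon$‑shadows the pseudo‑orbit of the shifted word, so $g^{N}(x(a))=x(\sigma a)$ by uniqueness. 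Thus $K_0:=\{x(a):a\in\{0,1\}^{\mathbb Z}\}\subseteq V$ is compact and $g^{N}$‑invariant, and $h\colon a\mapsto x(a)$ intertwines $\sigma$ with $g^{N}|_{K_0}$. Continuity of $h$: if $a,a'$ agree on $\{-m,\dots,m\}$ then their pseudo‑orbits agree on a block range of length $\asymp mN$, so $d(g^{i}x(a),g^{i}x(a'))\le2\epsilon<c$ there, and the uniform‑expansiveness estimate on the compact $E_j$ gives $d(x(a),x(a'))\to0$. Injectivity: if $a_i\neq a'_i$, then at the excursion time $i^\ast$ inside block $i$ one of $x(a),x(a')$ is within $\epsilon$ of $\mathcal O(p)$ while the other is at distance $>2c-\epsilon$, so $d(g^{i^\ast}x(a),g^{i^\ast}x(a'))>2(c-\epsilon)>0$. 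Hence $h$ is a continuous bijection from the compact space $\{0,1\}^{\mathbb Z}$ onto $K_0$, a homeomorphism, conjugating $\sigma$ to $g^{N}|_{K_0}$.

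The two steps I expect to be delicate are: organizing the spectral/cyclic decomposition so that $U$ genuinely meets an \emph{infinite} mixing piece (this is exactly where the implicit nontriviality of $K$ enters, and where a naive argument collapses on a periodic orbit); and calibrating the constants $c>\epsilon>\delta$ together with the block length $N$ so that the shadowing map is simultaneously trapped inside $V$, injective, and continuous — i.e.\ so that the ``$0$'' and ``$1$'' branches are separated by more than the expansiveness constant at the excursion time $i^\ast$ while every block junction stays below the shadowing threshold $\delta$.
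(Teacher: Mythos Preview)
The paper does not prove this statement at all: it is quoted verbatim as Theorem~5.4 of Moothathu--Oprocha \cite{MO} and used as a black box (in the proof of Theorem~\ref{entrhs} and in Section~\ref{exesection}). So there is no ``paper's own proof'' to compare against.

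That said, your argument is the standard one and is essentially what \cite{MO} does: reduce to a topologically mixing piece via the spectral decomposition for non-wandering expansive systems with shadowing, then code a two-symbol full shift by concatenating a periodic block and an ``excursion'' block, using shadowing to produce the orbits and expansiveness to get uniqueness, continuity, and injectivity of the coding map. Your calibration of constants ($3\epsilon<c$, $\delta\le\epsilon$, the excursion hitting a set at distance $>2c$ from $\mathcal O(p)$) is exactly the bookkeeping needed. You are also right to flag that the statement, as written, silently assumes $K$ is not a single periodic orbit; in the paper's applications $K$ is always a horseshoe or $\Omega(f)$ for an Anosov map, so this is indeed harmless there.
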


The next result is one of the key tools used in the proof of our main results.

\begin{theorem}\label{entrhs}
Let $f:M\to M$ be a $C^1$ diffeomorphism. Suppose $M$ contains a horseshoe. If $\psi$ is a singular suspension of $f$ with countable many singularities, then $h(\psi)>0$.
\end{theorem}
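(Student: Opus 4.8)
The plan is to show that a horseshoe produces an ergodic measure of positive entropy whose support avoids the singular locus $A_{Sing}$, and then invoke Theorem \ref{extraresult}. The key point is that $A_{Sing}$ is a countable union of backward-and-forward $f$-orbits of the finitely-or-countably many singularities, hence a countable subset of $M$; in particular it is meager and, crucially, any non-atomic ergodic measure gives it zero mass. So the real content is to find a non-atomic ergodic measure $\mu$ with $h_\mu(f)>0$ and $\mathrm{Supp}(\mu)\cap A_{Sing}=\emptyset$.

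First I would use the robustness and self-similarity of horseshoes. Let $\Lambda$ be the horseshoe and $U$ an isolating neighborhood. By Proposition \ref{hsprop}, $f|_\Lambda$ is expansive, non-wandering, and has the shadowing property, so Theorem \ref{potencia} applies to $f|_\Lambda$: for every nonempty (relatively) open $V\subset\Lambda$ there is $n>0$ and a compact $f^n$-invariant $K_V\subset V$ with $f^n|_{K_V}$ conjugate to the full $2$-shift. Since $A_{Sing}$ is countable and $\Lambda$ is a horseshoe (hence uncountable, indeed a Cantor set), I can pick a point $x\in\Lambda\setminus\overline{A_{Sing}}$ — here I should be slightly careful, since $A_{Sing}$ countable does not make $\overline{A_{Sing}}$ small, so instead I would argue as follows: the set $\Lambda\setminus A_{Sing}$ is an uncountable $G_\delta$-like set (complement of countably many points), so it is dense in $\Lambda$ and in particular contains points with small neighborhoods $V$ whose closure still meets $\Lambda$ in an uncountable set; more robustly, pick any $x\in\Lambda$ not in $A_{Sing}$ and a tiny relatively open $V\ni x$ in $\Lambda$, apply Theorem \ref{potencia} to get $K_V\subset V$ with $f^n|_{K_V}$ conjugate to the full shift. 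The subshift $K_V$ is a Cantor set carrying many non-atomic ergodic measures of positive $f^n$-entropy (e.g. the Bernoulli $(1/2,1/2)$ measure, entropy $\log 2$); pushing such a measure around by $\frac1n\sum_{j=0}^{n-1}f^j_*$ gives an $f$-invariant non-atomic measure $\mu$ supported in $\bigcup_{j=0}^{n-1}f^j(\overline V)$ with $h_\mu(f)=\frac1n\log 2>0$ by the Abramov-type formula $h_{\mu}(f^n)=n\,h_\mu(f)$; by the ergodic decomposition I may take $\mu$ ergodic while keeping $h_\mu(f)>0$ and non-atomicity.

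The remaining step is to arrange $\mathrm{Supp}(\mu)\cap A_{Sing}=\emptyset$. Since $\mu$ is non-atomic and $A_{Sing}$ is countable, $\mu(A_{Sing})=0$ automatically; but Theorem \ref{extraresult} asks for the support (a closed set) to be disjoint from $A_{Sing}$, which is stronger. To get this I would shrink $V$ enough that $\overline V\cap\Lambda$ — and hence $\bigcup_{j=0}^{n-1}f^j(\overline V\cap\Lambda)\supset\mathrm{Supp}(\mu)$ — is disjoint from the finitely many points of $A_{Sing}$ that could lie nearby; more precisely, since $\Lambda$ is compact and $A_{Sing}$ is countable, $\Lambda\setminus A_{Sing}$ is a dense $G_\delta$ in $\Lambda$ containing a Cantor subset $C$ disjoint from $A_{Sing}$ (every perfect Polish space minus a countable set contains a Cantor set), and one can run Theorem \ref{potencia} inside a clopen-in-$\Lambda$ neighborhood chosen to sit inside a small ball missing the relevant part of $A_{Sing}$; alternatively, and cleanest: pick $x\in\Lambda\setminus A_{Sing}$, choose $\varepsilon>0$ with $B(x,2\varepsilon)\cap A_{Sing}$ at most finite, then further shrink so $B(x,2\varepsilon)$ misses those finitely many points, and take $V=\Lambda\cap B(x,\varepsilon)$; then $\mathrm{Supp}(\mu)\subset\bigcup_{j=0}^{n-1}f^j(\overline{V})$. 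This last set still meets $A_{Sing}$ potentially through the iterates $f^j$, so the truly clean fix is to instead feed Theorem \ref{potencia} into the full-shift picture to produce a measure supported on an $f$-invariant horseshoe $\Lambda'\subset\Lambda$ with $\Lambda'\cap A_{Sing}=\emptyset$: since $A_{Sing}$ is $f$-invariant and countable while $\Lambda$ is an uncountable locally maximal hyperbolic set, a standard Markov-partition argument lets us delete a countable set and retain a subshift of finite type of positive entropy. I expect this final disjointness bookkeeping — upgrading "$\mu(A_{Sing})=0$" to "$\mathrm{Supp}(\mu)\cap A_{Sing}=\emptyset$" while keeping positive entropy — to be the main obstacle; everything else (existence of the positive-entropy ergodic non-atomic measure, application of Theorem \ref{extraresult}) is routine given the machinery already set up.
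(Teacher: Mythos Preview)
Your overall strategy matches the paper's exactly: produce an $f$-invariant compact set $K^*\subset\Lambda$ with positive entropy and $K^*\cap A_{Sing}=\emptyset$, take an ergodic measure of positive entropy supported on it, and apply Theorem~\ref{extraresult}. You also correctly isolate the only real difficulty, namely upgrading $\mu(A_{Sing})=0$ to $\mathrm{Supp}(\mu)\cap A_{Sing}=\emptyset$. However, neither of your proposed fixes closes this gap. Shrinking $V$ around a point $x\in\Lambda\setminus A_{Sing}$ does not help, because the support of the resulting measure is the $f$-invariant set $\bigcup_{j=0}^{n-1}f^j(K_V)$, and $A_{Sing}$ is itself $f$-invariant; a single point of $A_{Sing}\cap\Lambda$ with dense orbit in $\Lambda$ (which is the generic situation, since $\Lambda$ is transitive) makes $A_{Sing}$ dense in $\Lambda$, so no choice of small $V$ keeps its full $f$-orbit away. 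The ``standard Markov-partition argument'' you invoke is not standard and in fact fails for the same reason: to exclude a countable dense invariant set from a mixing SFT you would have to forbid infinitely many words, and there is no control preventing the entropy from collapsing to zero.

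The paper resolves this with a different idea that you are missing: Grillenberger's construction \cite{Gr} gives, inside the full $2$-shift, an \emph{uncountable} family $\{\Lambda_c\}_{c\in(0,\log 2)}$ of minimal subsets with $h(\sigma|_{\Lambda_c})=c$. Distinct entropy forces these minimal sets to be pairwise disjoint, so after pulling back via the conjugacy from Theorem~\ref{potencia} and saturating by $f,\dots,f^{n-1}$, one obtains uncountably many pairwise-disjoint $f$-minimal sets $K_c\subset\Lambda$ of positive entropy. Since $A_{Sing}$ is countable and each of its points lies in at most one $K_c$, all but countably many $K_c$ satisfy $K_c\cap A_{Sing}=\emptyset$; any such $K_c$ serves as $K^*$. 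This pigeonhole via an uncountable disjoint family is the key lemma your argument lacks.
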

\begin{proof}
 Under the light of Theorem \ref{equiv}, to show  $h(\psi)>0$, we need to find an ergodic measure for $f$ with $E_{\mu}(\gamma)<+\infty$. On the other hand, since $\gamma$ is continuous in  $M\setminus \pi(S)$, if we find an ergodic measure for $f$ with positive entropy and whose support is contained in $M\setminus A_{Sing}$ we are done.  For this sake, all our task is reduced to prove the following: \\
\textbf{Claim:} There is a compact and invariant set $K^{*}$ with positive topological entropy such that 
$$K^{*}\cap A_{Sing}=\emptyset.$$
\begin{proof}[\emph{\textbf{Proof of Claim}}]
Assume that $\Gamma$ is a horseshoe for $f$. It is well known that the hyperbolicity and local-maximality of $\Gamma$ implies $f|_{\Gamma}$ is expansive and has the shadowing property. Therefore, Theorem \ref{potencia} implies the existence of $n>0$ and $K\subset \Gamma$ so that $f^n|_{K}$ is topologically conjugated to the full shift of two symbols $\sigma$. Now by \cite[Theorem 2.5]{Gr}, for every $c\in [0,h(\sigma))$, there is a minimal subset $\Lambda_c\subset \Sigma$ such that $h(\sigma|_{\Lambda_c})=c$. Since $(0,h(\sigma))$ is uncountable, we obtain uncountable many minimal subsets $\Lambda_c$. In addition, those sets must be pairwise disjoint. Otherwise, there would exist  $c>c'>0$ and minimal subsets $\Lambda_c,\Lambda_{c'}$ satisfying $\Lambda_c\cap\Lambda_{c'}\neq\emptyset$. So, the minimality assumption implies $\Lambda_c=\Lambda_{c'}$ and this is impossible because we would have a minimal set with two distinct values for its entropy.  

\noindent Let $g:K\to \Sigma$ be the conjugacy homeomorphism between $f^n$ and $\sigma$ and denote $$K_{c}=\bigcup_{i=0}^n f^i(g^{-1}(\Lambda_c)).$$
Notice that each $K_c$ is minimal and since $h(f|_{K_c})=\frac{c}{n}$, we also have that the family $\{K_c\}_{c\in (0,h(\sigma))}$ is uncountable. 

To conclude the proof of the claim, we observe that since $A_{Sing}$ is countable, then it must intersect at most countable many sets $K_c$ and therefore, there is $K^*$ minimal, with positive entropy and disjoint from $A_{Sing}$.

\end{proof}
Finally, to conclude the proof we fix a minimal set $K^*$ with positive topological entropy and disjoint from $A_{Sing}$ as in the claim. Choose any ergodic measure $\mu$ supported in $K$ and with positive entropy. since the function $\gamma$ is  continuous on $M\setminus A_{Sing}$, then $\gamma$ is integrable on $Supp(\mu)$, therefore the conclusion is derived from Theorem \ref{equiv}.
\end{proof}

We are now in position to provide a proof for our main results.

\begin{proof}[Proof of Theorem \ref{generic}]
Let $M$ be a closed manifold. By  \cite[Theorem A]{Cro} there is a residual subset  $\mathcal{R}\subset \text{Diff}^1(M)$,   such that if $f\in \mathcal{R}$, then $f$ satisfy one of the following conditions:

\begin{enumerate}
    \item $f$ is a Morse-Smale diffeomorphism. 
    \item $f$ contains a homoclinic transverse intersection.
    \item $f$ contains a minimal subset $K_0$, which is the Hausdorff limit of non-trivial homoclinic classes. 
\end{enumerate}

 We will now show that this is the desired residual set.\\ 
 \textbf{Case 1:} Suppose $f$ is Morse-Smale, then $h(f)=0$. In addition, Morse-Smale diffeomorphisms are structurally stable. In particular, there is a neighborhood $\mathcal{U}$ of $f$ such that $g\in \mathcal{U}$, then $h(g)=0$ (see \cite{Katok}). Therefore, the conclusion is derived from Theorem \ref{zeroentr}.\\ 
 \textbf{Case 2:} If $f$ 
 contains a homoclinic transverse intersection, it contains a horseshoe (see \cite{Katok}). Since horseshoes are robust,  Theorem \ref{entrhs}  $h(\psi)>0$, if $\psi$  is the suspension of any diffeomorphism $C^1$-close to $f$. \\
 \textbf{Case 3:} Finally, if $f$ contains a minimal subset $K_0$, which is the Hausdorff limit of non-trivial homoclinic classes, then obviously $f$ contains a homoclinic class. Since homoclinic classes contain horseshoes, we derive the result exactly as in Case 2.
\end{proof}
The proof of Theorem \ref{generic2} is more delicate and requires the construction of a horseshoe for an open and dense set of conservative $C^1$ diffeomorphisms. Denote $\text{Diff}^1_{m, DS}(M)$ the subset of $\text{Diff}^1_{m}(M)$ with dominated splitting and $\mathcal{E}_{m}(M)$ the interior of $\text{Diff}^1_{m}(M)\setminus \text{Diff}^1_{m, DS}(M)$. 
In \cite[Theorem 1]{BCT} it was proved that there is a generic $\mathcal{G}_1\in \mathcal{E}_{m}(M)$ such that for every $f\in \mathcal{G}_1$ 
\begin{equation*}\label{positive_entropy}
    h_{top}(f)=\sup_{K\in H(f)}h_{top}(f,K)>0,
\end{equation*}
where $$H(f):=\{K\subset M: K  \,\, \text{is a horseshoe of}\,\, f \}.$$
To fix ideas, as always for generic sets,  there is a countable family of open and dense sets  $V_i$ in $\mathcal{E}_{m}(M)$  such that 
$$\mathcal{G}_1=\bigcap_{i=1}^{\infty}V_i.$$
Also, in \cite[Corollary D]{ACW2}  Avila, Crovisier, and Wilkinson proved that there is a dense $\mathcal{G}_2\subset \text{Diff}^1_{m, DS}(M)$ such that each $f\in \mathcal{G}_2$ has a horseshoe.

\begin{proof}[\emph{\textbf{Proof of Theorem \ref{generic2}}}]
First, we consider the set $\mathcal{G}_3:=\mathcal{G}_2\cup \mathcal{G}_1$. By definition, $\mathcal{G}_3$ is a dense set. Moreover, for each $f\in \mathcal{G}_3$ we have a horseshoe as we wish. Hyperbolic sets are persistent after $C^1$ perturbation (see Proposition \ref{hsprop}). So, for each $f\in \mathcal{G}_3$ there is a $C^1$-neighborhood $\mathcal{U}_f$ of $f$ such that, all $g\in \mathcal{U}_f$ has a horseshoe. Thus, consider the set 
$$\mathcal{R}:=\bigcup_{f\in \mathcal{G}_3}\mathcal{U}_f,$$
which is dense and open. Finally, observe that, by construction, for $f\in \mathcal{R}$ we have horseshoe, and consequently we obtain $h(\psi)>0$ from Thereom \ref{entrhs}.

\end{proof}

\begin{proof}[Proof of Theorem \ref{3d}]
Let $\phi$  be a $C^{1+\alpha}$-suspension flow of a diffeomorphism $f$. Therefore, $f$ is a $C^{1+\alpha}$-diffeomorphism of a surface $M$.  

Suppose $f$  has positive topological entropy. Therefore,  \cite[Corollary 4.3]{Kat2} implies the existence of a topological horseshoe with positive entropy. Since by assumption $Sing(\psi)$ is countable, we obtain $h(\psi)>0$ from Theorem \ref{entrhs}. 
    
\end{proof}
Observe that the previous result is false in higher dimensions. Indeed, the example introduced in \cite{SYZ} is the suspension of a $C^{\infty}$-diffeomorphism on a four-dimensional manifold with positive entropy and presents a singular suspension of zero entropy. Nevertheless, our techniques can be further applied to obtain a similar result for the higher dimensional setting, if we suppose an extra hypothesis. Precisely, we have the following result:

\begin{theorem}
 Let $\psi$ be a  $C^{1+\epsilon}$-flow obtained from the singular suspension of a diffeomorphism $f$. If $Sing(\psi)$  is countable and $f$ admits a hyperbolic measure with positive entropy, then $\psi$ has positive topological entropy.
\end{theorem}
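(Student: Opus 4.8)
The plan is to reduce the statement to Theorem~\ref{extraresult} by producing a non-atomic ergodic measure $\mu$ for $f$ whose metric entropy is positive and whose support avoids the countable set $A_{Sing}$. The hypothesis gives us a hyperbolic ergodic measure $\nu$ with $h_\nu(f)>0$, and by the Katok horseshoe theorem (valid for $C^{1+\epsilon}$ maps) such a measure is approximated, in entropy and in the weak-$*$ sense, by horseshoes: for every $\delta>0$ there is a compact, locally maximal, transitive hyperbolic set $\Gamma$ with $h(f|_\Gamma)>h_\nu(f)-\delta>0$. So the $C^{1+\epsilon}$-regularity is used exactly once, to manufacture a horseshoe out of the hyperbolic measure; from that point on the argument is identical to the proof of Theorem~\ref{entrhs}.

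First I would invoke Katok's theorem to fix a horseshoe $\Gamma\subset M$ for $f$ with $h(f|_\Gamma)>0$. Then, following verbatim the Claim inside the proof of Theorem~\ref{entrhs}: since $f|_\Gamma$ is expansive with the shadowing property, Theorem~\ref{potencia} yields $n>0$ and a compact $f^n$-invariant $K\subset\Gamma$ with $f^n|_K$ topologically conjugate to the full $2$-shift $(\Sigma,\sigma)$; by \cite[Theorem 2.5]{Gr} the shift contains, for each $c\in(0,h(\sigma))$, a minimal subsystem $\Lambda_c$ with $h(\sigma|_{\Lambda_c})=c$, and distinct values of $c$ give disjoint minimal sets. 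Pushing these back through the conjugacy and saturating under $f$ produces an uncountable pairwise-disjoint family of minimal sets $\{K_c\}$ for $f$, each with positive entropy. Since $A_{Sing}=\pi\!\left(\bigcup_{t}\phi_t(S)\right)$ is countable (because $Sing(\psi)=S$ is countable and each fiber meets the base in a point), it can intersect only countably many of the $K_c$, so some minimal $K^*$ with positive topological entropy is disjoint from $A_{Sing}$.

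Finally I would choose any ergodic measure $\mu$ supported on $K^*$ with $h_\mu(f)>0$ (this exists by the variational principle applied to $f|_{K^*}$, whose topological entropy is positive). Because $K^*$ is disjoint from the countable set $A_{Sing}$ and a positive-entropy ergodic measure is non-atomic, we have $Supp(\mu)\cap A_{Sing}=\emptyset$, so Theorem~\ref{extraresult} gives $h(\psi)>0$. (Equivalently, since $\gamma$ is continuous on the compact set $M\setminus A_{Sing}\supset Supp(\mu)$, one has $E_\mu(\gamma)<+\infty$, and Theorem~\ref{equiv} applies directly.) The only genuinely new point compared with Theorem~\ref{entrhs} is the first step; the main obstacle is simply making sure the version of Katok's horseshoe theorem one cites is available in the required regularity and dimension (it holds for $C^{1+\epsilon}$ diffeomorphisms of any closed manifold), after which everything is a transcription of the earlier argument, so there is no serious difficulty beyond bookkeeping.
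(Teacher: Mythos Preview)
Your proposal is correct and follows essentially the same route as the paper: invoke Katok's horseshoe theorem for $C^{1+\epsilon}$ diffeomorphisms admitting a hyperbolic measure of positive entropy, then feed the resulting horseshoe into Theorem~\ref{entrhs}. The paper simply cites Theorem~\ref{entrhs} as a black box, whereas you unpack its proof (Theorem~\ref{potencia}, Grillenberger's minimal subsystems, the countability argument), but the strategy and all the ingredients are identical.
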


\begin{proof}

The proof of this result is completely analogous to the proof of Theorem \ref{3d}. We just need to notice that the main tool to prove Theorem \ref{3d} was Katok's result in dimension $2$, which guarantees the existence of horseshoes  when the topological entropy is positive (cf. \cite{Kat2}). The same result holds under the assumption of existence of hyperbolic measures (cf. \cite{Kat2}). 
\end{proof}

\section{Expansive Flows}\label{expsection}

In this section, we exploit the entropy theory of expansive singular flows. The concept of expansive flow, introduced in \cite{BW}, was established as a mechanism to understand the expansive properties of hyperbolic flows. Over time, it developed into a rich, standard theory of expansiveness for non-singular flows. Nevertheless, it has been shown to be inefficient in dealing with flows displaying singularities accumulated by regular orbits. Despite this, some singular flows exhibit a behavior similar to the expansive ones but fail to be expansive as  \cite{BW}. The most famous example of such a flow is the geometric Lorenz attractor. To deal with the Lorenz attractor, it was introduced in \cite{Ko1}, the concept of $k^*$-expansive flow. Here, we will call both concepts expansive flow. This is supported by the following facts: The expansiveness in \cite{BW} is stronger than the $k^*$-expansiveness, and they coincide in the non-singular scenario (see \cite{Ko1}). Let us now recall its definition. Denote by $Rep$ the set of increasing homeomorphisms $h:\R\to \R$ satisfying $h(0)=0$.        

\begin{definition}
    A flow is expansive if there for every $\eps>0$ there is  $\delta>0$ such that if $x,y\in M$, $h\in Rep$ satisfy $d(\phi_t(x),\phi_{h(t)}(y))\leq \delta$, for every $t\in \mathbb{R}$ then there are $t_0\in \mathbb{R}$ such that $\phi_{h(t_0)}(y)\in\phi_{[t_0-\eps,t_0+\eps]}(x)$. 
\end{definition}

\begin{remark}
    It is well known that expansiveness is independent of the metric on $M$ (see \cite{BW} and \cite{Ko2}).
\end{remark}
In \cite{ACP}, it was proven that non-singular expansive flows have positive topological entropy if the ambient manifold has a dimension of at least two. Actually, the dimensional hypothesis can be replaced by dimension three, since there are not expansive non-singular flows on compact surfaces. So we may wonder if a similar result holds for expansive singular flows. The first important thing to notice is that there are examples of expansive singular flows on surfaces (\cite{Ar3}). Moreover, surface flows have zero topological entropy. Because of this, we need to assume $M$ to be at least three-dimensional. In this section, we will answer this question for expansive suspension flows over three-dimensional manifolds. 

We begin by recalling the following elementary property of singular expansive flows. We decided to provide proof for the reader's convenience.  

\begin{proposition}\label{finitesing}
    If a flow $\phi$ on $M$ is expansive, then $Sing(\phi)$ is finite. 
\end{proposition}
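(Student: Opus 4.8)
The plan is to argue by contradiction: suppose $Sing(\phi)$ is infinite. Since $M$ is compact, the set $Sing(\phi)$ has an accumulation point $\sigma_0$; moreover $Sing(\phi)$ is closed (it is the zero set of the continuous vector field $X_\phi$), so $\sigma_0 \in Sing(\phi)$, and there is a sequence of \emph{distinct} singularities $\sigma_n \to \sigma_0$ with $\sigma_n \neq \sigma_0$ for all $n$. The goal is to show that such an accumulation contradicts expansiveness. The key point is that two nearby singularities $\sigma_n$ and $\sigma_0$ are fixed points of the flow, hence the constant orbits $t \mapsto \phi_t(\sigma_n) \equiv \sigma_n$ and $t \mapsto \phi_t(\sigma_0) \equiv \sigma_0$ stay within distance $d(\sigma_n,\sigma_0)$ of each other for \emph{all} $t \in \mathbb{R}$, under \emph{any} reparametrization $h \in Rep$ (reparametrizing a constant orbit does nothing).

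The steps, in order, are as follows. First, fix $\eps > 0$ small enough that the $\eps$-balls around points of $M$ are "small" (in particular, small enough that distinct singularities near $\sigma_0$ are not within an $\eps$-segment of one another — which is automatic since the $\eps$-orbit-segment of $\sigma_0$ is just $\{\sigma_0\}$). Let $\delta > 0$ be the expansiveness constant associated to this $\eps$. Second, choose $n$ large enough that $d(\sigma_n, \sigma_0) \leq \delta$. Third, apply the definition of expansiveness with $x = \sigma_0$, $y = \sigma_n$, and $h = \mathrm{id}_{\mathbb{R}}$: since $d(\phi_t(\sigma_0), \phi_{h(t)}(\sigma_n)) = d(\sigma_0, \sigma_n) \leq \delta$ for every $t$, there must exist $t_0 \in \mathbb{R}$ with $\phi_{h(t_0)}(\sigma_n) \in \phi_{[t_0 - \eps, t_0 + \eps]}(\sigma_0)$. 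But the left side is $\sigma_n$ and the right side is $\{\sigma_0\}$, forcing $\sigma_n = \sigma_0$, contradicting our choice of a sequence of distinct singularities. Hence $Sing(\phi)$ cannot have an accumulation point, and being closed in the compact space $M$, it is finite.

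I do not expect a serious obstacle here — the argument is essentially a one-line observation that the expansiveness condition, applied to two constant orbits, degenerates to forcing the two fixed points to coincide. The only point requiring a line of care is making sure the definition of expansiveness as stated (which quantifies over $h \in Rep$ and asks for \emph{some} $t_0$) really does apply with $h$ the identity and really does force $\sigma_n = \sigma_0$; this is immediate because $\phi_{[t_0-\eps, t_0+\eps]}(\sigma_0) = \{\sigma_0\}$ for a fixed point. One might alternatively phrase the whole thing without contradiction: directly, if $d(\sigma, \sigma') \leq \delta$ for two singularities then $\sigma = \sigma'$, so the map $Sing(\phi) \to M$ has $\delta$-separated image, hence $Sing(\phi)$ is finite by compactness. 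Either phrasing works; I would present the direct version for brevity.
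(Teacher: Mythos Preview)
Your proposal is correct and follows essentially the same approach as the paper: the paper argues directly that any two singularities $\sigma,\sigma'$ with $d(\sigma,\sigma')\leq\delta$ must coincide (since $d(\phi_t(\sigma),\phi_t(\sigma'))\leq\delta$ for all $t$), hence $Sing(\phi)$ is $\delta$-separated and, being compact, finite. This is exactly the ``direct version'' you describe at the end, and your contradiction phrasing is just a minor reorganization of the same idea.
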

\begin{proof}
    Suppose $\phi$ is expansive, fix $\eps>0$ and let $\delta>0$ be given by the expansiveness of $\phi$. We claim that if $\sigma\in Sing(\phi)$, then $B_{\delta}(\sigma)\cap Sing(\phi)=\{\sigma\}$. Indeed, fix $\sigma,\sigma'\in Sing(\phi)$ and suppose $\sigma'\in B_{\delta}(\sigma)$. Thus, we have $d(\phi_t(\sigma),\phi_t(\sigma'))\leq\delta$, for every $t\in \mathbb{R}$. Therefore, the expansiveness implies $\sigma=\sigma'$, and the claim is proved. The proof ends by noticing that $Sing(\phi)$ is a compact set.  
\end{proof}

Next, we recall the concept of expansiveness in the discrete-time setting:
\begin{definition}
    We say that $f:M\to M$ is expansive if there is $e>0$ such that $B^{\infty}_{e}(x)=\{x\}$, for every $x\in M$, where $$B^{\infty}_{e}(x)=\{y\in M; d(f^n(x),f^n(y))\leq e, \forall n\in \mathbb{Z}\}.$$ The number $e$ is called the expansiveness constant of $f$. 
\end{definition}

Naturally, the notion of expansiveness deals with metric spaces. At this point, we need to define the appropriate metric for our manifold $\overline{M}$ (see the comment before definition \ref{Def-Susp} ).  The metric $\overline{d}$ on $\overline{M}$ is defined as follows (see \cite{BW}). First, for any two points $\overline{x}$ and $\overline{y}$ in a fiber $M\times\{s\}$ define: $$\overline{d}(\overline{x},\overline{y})=sd(\pi(\overline{x}),\pi(\overline{y}))+(1-s)d(f(\pi(\overline{x})),f(\pi(\overline{y}))).$$
Secondly, if $\overline{x}$ and $\overline{y}$ belongs to the same vertical segment, let $\overline{d}(\overline{x},\overline{y})$ denote the shortest distance between $\overline{x}$ and $\overline{y}$ along the $\phi$-orbit of $\overline{x}$, where the distance is given by the standard distance on $\mathbb{R}$.

Now,  let $CH(\overline{x},\overline{y})$ denote the family of finite chains $\{w_0,w_1,\dots,w_i\}$, where $w_0=\overline{x}$, $w_i=\overline{y}$ and for every $k=0,\dots,i-1$ one has that either $w_k$ and $w_{k+1}$ are contained in the same fiber or $w_k$ and $w_{k+1}$ are contained in the same vertical segment. Finally , the metric $\overline{d}$ is defined as 
$$\overline{d}(\overline{x},\overline{y})=\inf\left\{\sum_{k=0}^{i-1}\overline{d}(w_i,w_{i+1}); \{w_0,\dots,w_i\}\in CH(\overline{x},\overline{y})\right\}.$$

Now, we slightly improve \cite[Theorem 3.6]{Shi} to obtain several examples of expansive singular suspensions. 

\begin{theorem}\label{expsuspension}
    Let  $f$ be an expansive diffeomorphism. If a brake $\alpha$ has finitely many zeros, then the singular suspension $\psi$ with brake $\alpha$ is expansive.
\end{theorem}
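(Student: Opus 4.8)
The plan is to reduce expansiveness of the singular suspension $\psi$ to expansiveness of the underlying suspension flow $\phi$, which is already known by \cite[Theorem 3.5]{Shi} (or the classical result that suspensions of expansive diffeomorphisms are expansive). The key point is that, away from the singularities, $\psi$ and $\phi$ differ only by a time change, and a time change by an increasing homeomorphism does not affect expansiveness since the definition of expansive flow already quantifies over all reparametrizations $h\in Rep$. So the real work is to control the behavior of orbits that pass near the finitely many singularities.

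\textbf{Key steps.} First I would fix $\eps>0$ and let $\delta_\phi>0$ be the expansiveness constant of $\phi$ for this $\eps$. Let $S=\{\sigma_1,\dots,\sigma_k\}=Sing(\psi)$ be the (finite) zero set of $\alpha$. Using Proposition \ref{finitesing}-style reasoning, I would choose a small radius $r>0$ so that the balls $B_r(\sigma_i)$ are pairwise disjoint, and then pick $\delta>0$ small enough that: (i) $\delta<\delta_\phi$; (ii) if $\overline{d}(x,y)\le\delta$ and $x$ lies outside $\bigcup_i B_r(\sigma_i)$, then the two $\psi$-orbit segments through $x$ and $y$ stay, over the relevant time interval, inside the region where the time-change function $\theta(t,\cdot)$ is bi-Lipschitz with uniform constants (this uses that $\alpha$ is bounded away from $0$ on the complement of $\bigcup_i B_r(\sigma_i)$ and that $\alpha$ is continuous, hence uniformly continuous). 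Second, suppose $x,y\in\overline M$ and $h\in Rep$ satisfy $\overline d(\psi_t(x),\psi_{h(t)}(y))\le\delta$ for all $t\in\mathbb R$. I would split into two cases depending on whether the $\psi$-orbit of $x$ is attracted to / repelled from $S$ or not. If the orbit of $x$ (forward or backward) limits onto some $\sigma_i$, then since the $\psi$-orbit closure of $x$ is contained in a $\delta$-tube around the $\psi$-orbit of $y$, the orbit of $y$ must also limit onto a singularity within distance $\delta$ of $\sigma_i$, hence onto $\sigma_i$ itself; a direct argument on the (finitely many, hyperbolic-type) local pictures at each $\sigma_i$ together with the expansiveness of $f$ restricted to the relevant vertical fibers pins $x$ and $y$ to the same $\psi$-orbit up to an $\eps$-shift. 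If neither the forward nor the backward orbit of $x$ meets a neighborhood of $S$, then the whole orbit stays in the region of uniform time change, where I convert the given $\psi$-tracking with reparametrization $h$ into a $\phi$-tracking with a (rescaled) reparametrization $\tilde h\in Rep$ and tracking constant $\le\delta_\phi$; expansiveness of $\phi$ then yields $t_0$ with $\phi_{\tilde h(t_0)}(y)\in\phi_{[t_0-\eps,t_0+\eps]}(x)$, and translating back through the time change (which only reparametrizes, never changes orbits) gives the corresponding $t_0'$ for $\psi$.

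\textbf{Main obstacle.} The hard part will be the mixed case: orbit segments of $x$ and $y$ that spend a long time near a singularity $\sigma_i$ and then leave it, so that the time change $\theta$ is highly non-uniform precisely where the two orbits are being compared. Here one cannot simply quote the time-change-invariance of expansiveness, because the reparametrization $h$ is given and need not be the ``correct'' one matching the $\phi$-dynamics; near $\sigma_i$ the $\psi$-flow spends infinite (or unboundedly long) time, so a $\delta$-tube in $\overline d$ around $\psi_{[0,\infty)}(x)$ near $\sigma_i$ forces strong constraints. The resolution is to observe that if $\psi_t(x)\to\sigma_i$ then, because $B_\delta(\sigma_i)$ contains no other singularity, $\psi_{h(t)}(y)$ is trapped in $B_\delta(\sigma_i)$ for all large $t$ and hence also converges to $\sigma_i$; the same for negative time with (possibly) a different singularity. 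Thus both $x$ and $y$ lie on the union of the stable and unstable ``vertical-segment'' sets of the finitely many singularities, and on each such vertical segment the dynamics is governed by $f$, which is expansive; a careful bookkeeping of which fiber each point sits in, using the expansiveness constant $e$ of $f$ to choose $\delta$, closes the argument. This mirrors, with the extra singular bookkeeping, the proof of \cite[Theorem 3.6]{Shi}, which is exactly what the statement claims to slightly improve.
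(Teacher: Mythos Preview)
Your approach is genuinely different from the paper's, and as written it has a real gap.

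\textbf{What the paper does.} The paper never invokes expansiveness of the non-singular suspension $\phi$. Instead it works directly with the cross-section $M\times\{0\}$ (arranged to miss $Sing(\psi)$). Given $\overline x,\overline y$ that $\delta$-track under $\psi$, it first disposes of the case $\overline y\in\mathcal O_\psi(\overline x)$. For $\overline y\notin\mathcal O_\psi(\overline x)$ it shows, using only that the singularities are $\delta$-separated, that necessarily $\pi(\overline x),\pi(\overline y)\notin A_{Sing}$. Then the $\psi$-orbit of $\overline x$ hits $M\times\{0\}$ at a bi-infinite sequence of times $t_n$; the $\delta$-tracking forces $\psi_{h(t_n)}(\overline y)$ into a thin collar $M_\eps$ of the section, and a direct metric estimate in the product metric $\overline d$ gives $d(f^n(\pi(\overline x)),f^n(\pi(\overline y)))<e$ for all $n$. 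Expansiveness of $f$ finishes.

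\textbf{Where your argument breaks.} Your case split is not exhaustive, and the case you call the ``main obstacle'' is misdiagnosed. You split into (i) the $\psi$-orbit of $x$ limits onto some $\sigma_i$, and (ii) the $\psi$-orbit stays outside a fixed neighborhood of $S$. But the generic situation is neither: when $\pi(x)\notin A_{Sing}$ the $\psi$-orbit is a full reparametrized $\phi$-orbit that may visit every neighborhood of $S$ infinitely often without converging. Your ``resolution'' paragraph only treats the convergent case (``if $\psi_t(x)\to\sigma_i$\dots''), so this third case is simply not covered. Relatedly, you never argue that $\pi(y)\notin A_{Sing}$, which you need before you can write $\psi_{h(t)}(y)=\phi_{\tau_y(h(t))}(y)$ with $\tau_y$ a homeomorphism of $\mathbb R$.

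\textbf{How to repair it along your lines.} The irony is that your time-change idea actually handles the hard case cleanly once the split is made correctly: for $\pi(x),\pi(y)\notin A_{Sing}$ one has $\psi_t(x)=\phi_{\tau_x(t)}(x)$ and $\psi_s(y)=\phi_{\tau_y(s)}(y)$ with $\tau_x,\tau_y\in Rep$, so the $\psi$-tracking with $h$ becomes $\phi$-tracking with $\tilde h=\tau_y\circ h\circ\tau_x^{-1}\in Rep$, regardless of how close the orbits come to $S$; no uniform bi-Lipschitz bound on the time change is needed. Expansiveness of $\phi$ then gives $y\in\mathcal O_\phi(x)=\mathcal O_\psi(x)$, reducing to the same-orbit case. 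What remains is (a) to prove that $\delta$-tracking with $\overline y\notin\mathcal O_\psi(\overline x)$ forces $\pi(\overline x),\pi(\overline y)\notin A_{Sing}$, and (b) to extract the $\psi$-conclusion $\psi_{h(t_0)}(y)\in\psi_{[t_0-\eps,t_0+\eps]}(x)$ from $y\in\mathcal O_\psi(x)$; both are done in the paper by a cross-section argument and are not supplied by your sketch.
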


\begin{proof}

Since $\alpha$ has finite zeros, then $Sing(\psi)$ is finite.  Without loss of generality, we can assume $Sing(\psi)\cap (M\times \{0\})=\emptyset$.
Denote by $e$ the expansiveness constant of $f$ and 
$$d=\min\{\overline{d}(\sigma_i,\sigma_j); \sigma_i,\sigma_j\in Sing(\psi), \sigma_i\neq\sigma_j\}.$$

Fix $\eps>0$  such that $M_{\eps}\cap Sing(\psi)=\emptyset$, where $$M_{\eps}=\psi_{[-\eps,\eps]}(M\times\{0\})$$
\begin{center}
    and 
\end{center} $$\text{diam}(\psi_{[-\eps,\eps]}(\overline{x}))\leq\frac{e}{2},$$
for every $\overline{x}\in M\times\{0\}$.
Now, fix $$4\delta\leq\min\{d,e\}$$ such that $B_{4\delta}(\overline{x})\subset M_{\eps}$, for every $\overline{x}\in M\times\{0\}$.

Fix $\overline{x},\overline{y}\in \overline{M}$ and suppose there is a reparametrization $h$ such that $$\overline{d}(\psi_{t}(\overline{x}),\psi_{h(t)}(\overline{y}))\leq\delta, $$
for every $t\in \mathbb{R}$.\\

First, notice that the choice of $\delta$ implies that $\overline{x}$ and $\overline{y}$ are regular. Now, suppose $\overline{y}\in \mathcal{O}_{\psi}(\overline{x})$. In this case, there is $t\in  \mathbb{R}$ such that $\psi_{t}(\overline{x})\in M\times\{0\}$. In this case $\psi_{h(t)}\in M_{\eps}$ and therefore $\overline{y}\in \psi_{[-\eps,\eps]}(\overline{x})$.

Next, observe that  by the choice of $\delta$, if $\overline{y}\notin \mathcal{O}_{\psi}(\overline{x})$, then  $\pi(\overline{x}),\pi(\overline{y})\notin A_{Sing}$. Indeed, if this is the case,  there should be $\sigma_x$ and $\sigma_y\in Sing(\psi)$ such that 
$$\psi_{t}(\overline{x})\to \sigma_x \textrm{ and } \psi_{h(t)}(\overline{y})\to \sigma_y,$$
when $t\to \infty$ or $t\to-\infty$. contradicting the choice of $\delta$. 

For each $n\in \mathbb{Z}$, let $t_n\in \mathbb{R}$ be such that $$\psi_{t_n}(\overline{x})=(f^n(\pi(\overline{x})),0)$$ \begin{center}
    and
\end{center} $$\psi_{(t_n,t_{n+1})}(\overline{x})\cap (M\times \{0\})=\emptyset.$$ 

Since $$\overline{d}(\psi_{t_n}(\overline{x}),\psi_{h(t_n)}(\overline{y}))\leq\delta,$$
for every $n\in \mathbb{Z}$, there is $-\eps<\eps_n<\eps$  such that $$\psi_{h(t_n)+\eps_n}(\overline{y})=(f^n(\pi(\overline{y})),0).$$
Therefore, from definition of $\overline{d}$ 
\begin{eqnarray*}
 d(f^n(\pi(\overline{x})),f^n(\pi(\overline{y})))&=& \overline{d}(\psi_{h(t_n)}(\overline{x}), \psi_{h(t_n)+\eps_n}(\overline{y}))\\
 &\leq &  \overline{d}(\psi_{h(t_n)}(\overline{x}), \psi_{h(t_n)}(\overline{y}))+  \overline{d}(\psi_{h(t_n)}(\overline{y}), \psi_{h(t_n)+\eps_n}(\overline{y})),\\
 &\leq&\delta+\frac{e}{2}< e
\end{eqnarray*}
for every $n\in \mathbb{Z}$. So, the expansiveness of $f$ implies $\pi(\overline{x})=\pi(\overline{y})$ and therefore $\psi_{h(t_0)+\eps_0}(\overline{y})=\psi_{t_0}(\overline{x})$ and this completes the proof.

\end{proof}

\begin{remark}
    The main difference between the previous result and \cite[Theorem 3.6]{Shi}, is that while in \cite{Shi} it is assumed that $\alpha$ has one zero, here we assume that $\alpha$ can have how many zeros we want, once they are finitely many.
\end{remark}

We now proceed to obtain the proof of Theorem \ref{expentr}.  The first step towards its proof is the following result that provides us with a converse for Theorem \ref{expsuspension}.

\begin{theorem}\label{cw-exp}
    Let  $\psi$ be a singular suspension of $f\colon M\to M$ with a brake $\alpha$. If $\psi$ is expansive, then $f$ is expansive.    
\end{theorem}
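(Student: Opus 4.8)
The plan is to show that an expansiveness constant for $\psi$ produces an expansiveness constant for $f$, by reconstructing an orbit-following $\psi$-reparametrization from a bi-infinite $f$-tracking sequence. First I would fix the expansiveness constant of $\psi$: given $\eps>0$ small, let $\delta>0$ be the constant from the definition of expansiveness for $\psi$. As in the proof of Theorem~\ref{expsuspension}, I would normalize so that $Sing(\psi)\cap(M\times\{0\})=\emptyset$ and choose $\eps$ so small that $M_\eps:=\psi_{[-\eps,\eps]}(M\times\{0\})$ misses $Sing(\psi)$ and each local flow box $\psi_{[-\eps,\eps]}(\overline x)$ has $\overline d$-diameter at most, say, $e_0/4$ for a number $e_0$ to be taken as the candidate expansiveness constant for $f$; I would also shrink things so that $\overline d$-balls of the relevant radius around fiber points sit inside $M_\eps$ and so the $\overline d$-distance between distinct singularities stays bounded below. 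The candidate expansiveness constant for $f$ will be an $e_0$ chosen (depending on $\delta$ and on the geometry of the embedding $M\times\{0\}\hookrightarrow\overline M$) so that $\overline d$-closeness of two vertical segments through distance-$<e_0$ fiber points forces $\overline d$-closeness below $\delta$ of their $\psi$-orbits after a suitable reparametrization.

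The core step: suppose $x,y\in M$ satisfy $d(f^n(x),f^n(y))\le e_0$ for all $n\in\mathbb Z$; I must produce $\overline x=(x,0)$, $\overline y=(y,0)$ and a reparametrization $h\in Rep$ with $\overline d(\psi_t(\overline x),\psi_{h(t)}(\overline y))\le\delta$ for all $t$, then invoke expansiveness of $\psi$ to get $\overline y\in\psi_{[-\eps,\eps]}(\overline x)$, which—since both are on the fiber $M\times\{0\}$ and $\eps$ is smaller than the first return time scale—forces $x=y$. To build $h$: for each $n$ let $t_n$ (resp. $s_n$) be the time at which the $\psi$-orbit of $\overline x$ (resp. $\overline y$) reaches $(f^n(x),0)$ (resp. $(f^n(y),0)$); these are well defined and strictly increasing because $x,y$ are regular points not lying on any vertical segment through a singularity—and I would argue that the latter follows from the uniform two-sided $e_0$-closeness together with fact (3) in Section~\ref{singularsups} (a point whose $\psi$-orbit limits onto a singularity cannot stay $e_0$-close to one that does not, once $e_0$ is below the singularity-separation scale). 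Then define $h$ to map $t_n\mapsto s_n$ and interpolate linearly (or monotonically) on each interval $[t_n,t_{n+1}]$; this is an increasing homeomorphism of $\mathbb R$ fixing $0$. On the segment between consecutive fiber crossings, both $\psi_t(\overline x)$ and $\psi_{h(t)}(\overline y)$ travel the vertical segments over $f^n(x)$ and $f^n(y)$ respectively, and the definition of $\overline d$ together with $d(f^n(x),f^n(y))\le e_0$ and $d(f^{n+1}(x),f^{n+1}(y))\le e_0$ bounds their $\overline d$-distance by a quantity that $\to 0$ with $e_0$; choosing $e_0$ small enough makes it $\le\delta$ uniformly in $n$ and $t$.

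The main obstacle I anticipate is the comparison on the vertical segments: the parametrization speeds of $\psi$ along the segment over $f^n(x)$ and along the segment over $f^n(y)$ differ because $\alpha$ is not constant, so one cannot simply push the fiber estimate $d(f^n(x),f^n(y))\le e_0$ through the flow directly. I would handle this by working with the piecewise-monotone reparametrization $h$ defined crossing-to-crossing (so the two orbits hit the fibers $M\times\{0\}$ simultaneously under $h$), and then using the explicit form of $\overline d$ on a vertical segment—the $\mathbb R$-distance along the $\phi$-orbit combined with the interpolation formula $\overline d$ on fibers $M\times\{s\}$—to see that at $\phi$-parameter value $s\in[0,1]$ the two points are at $\overline d$-distance $\le s\,d(f^n(x),f^n(y))+(1-s)\,d(f^{n+1}(x),f^{n+1}(y))\le e_0$, plus a controlled error from passing between the $\psi$-time parametrization and the $\phi$-parameter; since we only need $\le\delta$ and $\delta$ is fixed in advance, taking $e_0\le\delta/2$ (after absorbing the bounded geometric constants) closes the argument. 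A secondary point to be careful about is the case $\overline y\in\mathcal O_\psi(\overline x)$, which is disposed of exactly as in Theorem~\ref{expsuspension}: then some $\psi$-time image of $\overline x$ lies on $M\times\{0\}$, the reparametrized image of $\overline y$ lies in $M_\eps$, and the no-return-within-$\eps$ choice forces $x=y$.
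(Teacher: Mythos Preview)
Your overall strategy---lift the $f$-orbit tracking to a $\psi$-reparametrization and apply expansiveness of $\psi$---is the paper's strategy, but there is one genuine gap and one technical slip.

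\textbf{The gap: points in $A_{Sing}$.} You assert that if $d(f^n(x),f^n(y))\le e_0$ for all $n$ then neither $\overline x$ nor $\overline y$ lies on a vertical segment through a singularity, citing fact~(3) of Section~\ref{singularsups}. But fact~(3) concerns $\psi$-orbits limiting to a singularity, whereas your hypothesis is closeness of the \emph{$f$-orbits}; passing from the latter to any statement about $\psi$-orbits is precisely the job of the reparametrization $h$, and the crossing times $t_n,s_n$ you need to build $h$ fail to exist once $x$ (or $y$) lies in $A_{Sing}$. So the argument is circular. Nothing in the hypothesis $d(f^n(x),f^n(y))\le e_0$ prevents, say, $f^N(x)\in\pi(Sing(\psi))$ while $y\notin A_{Sing}$; that would only be ruled out if we already knew $f$ is expansive. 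The paper does not try to exclude this case: it proves the claim only for $x,y\in M\setminus A_{Sing}$, observes that $A_{Sing}$ is a \emph{finite} union of $f$-orbits (since $Sing(\psi)$ is finite by Proposition~\ref{finitesing}), and then invokes the theorem of Williams~\cite{Williams} that a homeomorphism expansive off finitely many orbits is expansive on the whole space. You need this extra step or an equivalent.

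\textbf{The slip: the interpolation.} Linear interpolation of $h$ between crossing times does not put $\psi_t(\overline x)$ and $\psi_{h(t)}(\overline y)$ on the same fiber $M\times\{s\}$: the brake $\alpha$ distorts $\psi$-time against $\phi$-height differently along the two vertical segments, and when those segments pass close to $Sing(\psi)$ this height discrepancy is not uniformly small in $n$, so your ``controlled error'' is not absorbed by shrinking $e_0$. The paper's remedy is to define $h$ by \emph{height matching}: choose $h(t)$ so that $\psi_{h(t)}(\overline y)\in M\times\{s(\overline x,t)\}$. Then the fiber formula for $\overline d$ gives directly $\overline d(\psi_t(\overline x),\psi_{h(t)}(\overline y))\le e_0$, with no error term, and $e_0\le\delta$ suffices.
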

\begin{proof}
    Suppose $\psi$ is expansive. We need some previous notation to simplify the exposition during this proof. First, we set the notations $d$ and $\overline{d}$ for the metrics on $M$ and $\overline{M}$, respectively. Let us denote $\overline{x}=(x,0)\in \overline{M}$. By Proposition \ref{finitesing}, $Sing(\psi)$ is finite, and therefore, $A_{Sing}$ is countable. 
    
    Fix $\eps>0$ and consider $\delta>0$ given by the expansiveness of $\psi$. We stated that $e=\frac{\delta}{2}$ is a countable-expansiveness constant for $f$. To prove that, we shall prove that the dynamical ball $B^{\infty}_{e}(x)$ is countable for every $x\in M$. For this sake, we need the following:\\
    \textbf{Claim:} If $x,y \in M\setminus A_{Sing}$ and $y\in B^{\infty}_{\delta}(x)$, then $x=y$.
    \begin{proof}[\emph{\textbf{Proof of Claim}}]
       We denote $\overline{x}=(x,0)$ and $\overline{y}=(y,0)$. From hyphoteses  $$d(f^n(x),f^n(y))\leq\delta,$$ for every $n\in\mathbb{Z}$. Recall that for every $\overline{z}\in \overline{M}$ and every $n\in\mathbb{Z}$, define $t(\overline{z},n)$ by
    $$\psi_{t(\overline{z},n)}(\overline{z})=(f^n(\pi(\overline{z})),0).$$ 
    
    Next, we will construct a suitable reparametrization for $\overline{y}$. First, notice that for any $n\in \mathbb{Z}$ and any $t\in [t(\overline{x},n),t(\overline{x},n+1))$, there is a unique  $s(\overline{x},t)\in [0,1)$ such that 
    $$\psi_t(\overline{x})=\phi_{\tau(\overline{x},t)}(\overline{x})=(f^n(\pi(\overline{x})),s(\overline{x},t)).$$
    Let us define homeomorphism $h:\mathbb{R}\to \mathbb{R}$ as follows. 
     For any $n\in \mathbb{Z}$, and $$t\in [t(\overline{x},n),t(\overline{x},n+1))$$ choose $$h(t)\in[t(\overline{y},n),t(\overline{y},n+1))$$ as the unique time satisfying $$\psi_{h(t)}(\overline{y})\in M\times \{s(\overline{x},t)\}$$          In this way, for every $t\in \mathbb{R}$, one has that $\psi_t(\overline{x})$ and $\psi_{h(t)}(\overline{y})$ belong to the same fiber $\overline{M}\times\{s\}$. 
     Since $d(f^n(x),f^n(y))\leq \delta$, for every $n\in \mathbb{Z}$, by the definition of $\overline{d}$ and  we have:

     $$ \overline{d}(\psi_t(\overline{x}),\psi_t(\overline{y}))\leq s(\overline{x},t) d(f^n(x),f^n(y)))+ (1-s(\overline{x},t))(d(f^{n+1}(x),f^{n+1}(y)))\leq \delta,$$     
     fore every $t\in [t(\overline{x},n),t(\overline{x},n+1))$.
     
     The expansiveness of $\psi$ gives us $t_0\in \mathbb{R}$ such that
     $$\psi_{h(t_0)}(\overline{y})\in \psi_{[t_0-\eps,t_0+\eps]}(\overline{x}).$$ 
     By construction of $h$, $\phi_{t_0}(\overline{x})$ and $\phi_{h(t_0)}(\overline{y})$  are in the same fiber and therefore $$\phi_{t_0}(\overline{x})=\phi_{h(t_0)}(\overline{y}).$$ We then conclude $x=\pi(\overline{x})=\pi(\overline{y})=y$.   
    \end{proof}
To conclude our proof, we observe that the claim implies $f$ is expansive in $M\setminus A_{Sing}$. Moreover, by definition $A_{Sing}$ is a finite collection of orbits of $f$. So, we can apply \cite[Main Theorem]{Williams} and conclude that $f$ is  expansive, as we wished.

\end{proof}

\begin{proof}[Proof of Theorem \ref{expentr}]
    Let $\psi$ be an expansive $C^{1}$-flow on a three-dimensional manifold which is a singular suspension. Proposition \ref{finitesing} implies $Sing(\psi)$ is finite. By Theorem \ref{cw-exp}, $\psi$ is the suspension of an expansive  $C^1$-diffeomorphism $f$ on a boundary-less surface $S$.  Next, let us find a horseshoe for $f$. To do this, we recall Theorem \cite[Theorem 1.6]{Lewo}, which states that $f$ must be topologically conjugated to a pseudo-Anosov diffeomorphism $g$. On the other hand, every pseudo-Anosov surface diffeomorphism has, by definition, a pair of transverse stable and unstable foliations which are dense due \cite[Corollary 14.15]{FB}, since $S$ is boundary-less. In addition,  \cite[Theorem 14.19]{FB} implies $g$ has dense periodic orbits. Finally, for any periodic point  $p\in S$, the denseness of the stable and unstable manifolds of $p$ implies the existence of a homoclinic intersection and, therefore, a horseshoe for $g$. Since $g$ is conjugated to $f$, then $f$ also has a horseshoe, and therefore, Theorem \ref{entrhs} implies that $\psi$ has positive entropy.

\end{proof}

\section{Some Examples and Further Discussion}\label{exesection}

This Section is devoted to providing some examples and discusses some aspects of the strength of our results. Let us begin by recalling that in the examples constructed in  \cite{SYZ} $f$ is minimal and $\psi$ has a unique singularity. In our first example, we show a diffeomorphism with positive entropy for that is not minimal and it is impossible to obtain a singular suspension with zero entropy by adding a unique singularity.

\begin{example}
    Let $g\colon M_1\to M_1$ be a minimal diffeomorphism with positive topological entropy as in \cite{SYZ}. Let $h\colon M_2\to M_2$ be a diffeomorphism whose non-wandering set is formed by finitely many periodic orbits, i.e., $\mathcal{O}_h(p_1), \dots, \mathcal{O}_h(p_n)$. Notice that any Morse-Smale diffeomorphism satisfies the condition of $h$. Consider  $M=M_1\times M_2$ and $f=g\times h$. It is easy to see that $f$ is not minimal and $$\Omega(f)= \bigcup_{i=1}^{n} M_1\times \mathcal{O}_h(p_i),$$ where the family   $M_1\times \mathcal{O}_h(p_i)$  are minimal subsets, pairwise disjoint, and with positive topological entropy. Therefore, by Theorem \ref{extraresult} to obtain a singular suspension of $f$ with zero entropy, we need a brake  $\alpha$ similar to the brake obtained in \cite{SYZ}, but with at least $n$ zeros. \\

\end{example}

By the last example, we could wonder if $\Omega(f)$ is a disjoint union of minimal sets, then it could be enough to obtain a singular suspension $f$ with finitely or countable many singularities to vanish the topological entropy. The next example illustrates that it is also not enough.

\begin{example}
    Let $g$ be as in the previous example. Let $h\colon \mathbb{T}^2\to \mathbb{T}^2$ be defined by $h(x,y)=(x+y,y)$. Notice that fixes any horizontal fiber $S^1\times \{y\}$ is $h$-invariant. Moreover, $h|_{S^1\times\{y\}}$ is a rational circle rotation if $y$ is rational and irrational rotation otherwise. If we set $f=g\times h$, then $\Omega(f)$ is an uncountable union of pairwise disjoint minimal sets with positive entropy.  Therefore, by Theorem \ref{extraresult} we need uncountable many singularities to obtain a singular suspension with zero entropy.  
\end{example}

We end this work by discussing the strength of our techniques. First, we would like to remark that in Theorem \ref{extraresult} we do not mention the cardinality of the set of zeros in the function $\alpha$. We can then apply this result to obtain singular suspensions with uncountable many singularities and positive topological entropy. 

\begin{example}
   Let $f \colon M\to M$ be a transitive Anosov diffeomorphism. It is well known that every invariant and non-empty open subset of $M$ contains a horseshoe $\Lambda$ (see \cite{Katok}). Let $\Lambda$ be a horseshoe for  $f$. Let $\psi$ be a singular suspension of $f$ such that $\pi(Sing(\psi))=\Lambda$. This implies that $Sing(\psi)$ is uncountable.  Let $U=M\setminus\Lambda$. Then there is a horseshoe $\Lambda_2\subset  U$ and therefore $h(\psi)>0$, by Theorem \ref{extraresult}.     
\end{example}

    Using these ideas and mixing them with the proof of Theorem \ref{potencia} and Theorem \ref{extraresult},  we can prove that the only way of making the entropy of a singular suspension of an Anosov diffeomorphism vanishes is by making $\Omega\subset \pi(Sing(\psi))$.

\begin{proof}[\emph{\textbf{Proof of of Theorem \ref{Anosov-Susp}}}]
    Let $f$ be an Anosov diffeomorphism and $\psi$ a singular suspension of $f$. If $\pi(Sing(\psi))=M$ then $\gamma(x)=+\infty$, for every $x\in M$. Thus, Theorem \ref{equiv} implies $h(\psi)=0$.
    Conversely, suppose $h(\psi)=0$. Therefore, by Theorem \ref{extraresult} $A_{Sing}$ intersects any horseshoe of $f$. Therefore for every horseshoe $\Lambda$ of $f$, there is a point $x\in \Lambda$ and a singularity $\sigma_x$ such that $\pi(\sigma_x)=(x,0)$. Moreover, Theorem \ref{potencia} combined with \ref{extraresult} implies that these points $x$ are densely distributed in $\Lambda$. Since the horseshoes of an Anosov diffeomorphism are dense in $\Omega(f)$, we obtain a set $A\subset Sing(\psi)$ such that $\pi(A)$ is dense on $\Omega(f)$. Since $Sing(\psi)$ is closed, then $\Omega(f)= \overline{\pi(\overline{A})}\subset \pi(Sing(\psi))$.  
\end{proof}

Finally, notice that our main techniques are of a topological nature. Apart from the generic scenario, we can also obtain that the positiveness of topological entropy is preserved under singular suspension under the light of topological dynamics. The next result illustrate this point.

\begin{theorem}\label{shadcor}
    Let $f$ be a $C^1$-diffeomorphisms and let $K\subset M$ be a compact and $f$-invariant subset. Suppose $K$ is infinite, $f|_{K}$ is expansive and has the shadowing property. Let $\psi$ be a singular suspension with countable-many singularities, then both $f$ and $\psi$ have positive entropy. 
\end{theorem}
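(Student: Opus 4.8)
The plan is to reduce the statement to a direct application of the machinery already assembled in the paper, namely Theorem \ref{potencia} (which produces a subshift inside any open set of a non-wandering expansive shadowing system) and Theorem \ref{entrhs} (which upgrades a horseshoe-like object in the base to positive entropy of the singular suspension). First I would observe that since $K$ is infinite, compact, $f$-invariant, and $f|_K$ is expansive with the shadowing property, the restriction $f|_K$ is a non-wandering expansive homeomorphism with shadowing; here I should note that expansiveness plus shadowing on an infinite compact set forces a rich recurrence structure, so one can pass to $\Omega(f|_K)$, which is itself infinite (an expansive system whose non-wandering set is finite is a finite union of periodic orbits, hence finite), and apply Theorem \ref{potencia} to $f|_{\Omega(f|_K)}$ with any open set $U$. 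This yields $n>0$ and a compact $f^n$-invariant $K_U$ on which $f^n$ is topologically conjugate to the full $2$-shift $(\Sigma,\sigma)$. In particular $h(f)\geq h(f|_K)\geq \tfrac{1}{n}h(\sigma)=\tfrac{\log 2}{n}>0$, which already gives the first half of the conclusion.

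Next I would obtain positivity of $h(\psi)$. The cleanest route is to mimic the proof of Theorem \ref{entrhs} verbatim: inside the conjugate copy of the full shift $(\Sigma,\sigma)$ sitting (via $f^n$) inside $K$, invoke \cite[Theorem 2.5]{Gr} to produce an uncountable family $\{\Lambda_c\}_{c\in(0,h(\sigma))}$ of pairwise disjoint minimal subshifts with $h(\sigma|_{\Lambda_c})=c$, pull them back through the conjugacy and spread them over the $f$-orbit to get an uncountable family $\{K_c\}$ of pairwise disjoint minimal subsets of $M$ with $h(f|_{K_c})=c/n>0$. Since $Sing(\psi)$ is countable, the saturated set $A_{Sing}=\pi(\bigcup_t \phi_t(S))$ is countable, hence meets at most countably many of the $K_c$, leaving some minimal $K^*$ with $h(f|_{K^*})>0$ and $K^*\cap A_{Sing}=\emptyset$. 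Choosing an ergodic measure $\mu$ supported on $K^*$ with $h_\mu(f)>0$, continuity of $\gamma$ on $M\setminus A_{Sing}$ gives $E_\mu(\gamma)<+\infty$, so Theorem \ref{equiv} (or directly Theorem \ref{extraresult}, whose hypotheses are exactly $h_\mu(f)>0$ and $\operatorname{Supp}(\mu)\cap A_{Sing}=\emptyset$) yields $h(\psi)>0$.

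In fact, once the subshift $K_U$ is located, the quickest finish is simply to note that $K_U$, being conjugate to a full shift, is — after spreading over the $f$-orbit — a compact invariant set carrying an ergodic measure of positive entropy, and then quote Theorem \ref{extraresult}; but one must be slightly careful that Theorem \ref{extraresult} needs the support of $\mu$ to avoid $A_{Sing}$, which is why the detour through the uncountable minimal family $\{K_c\}$ is genuinely needed rather than optional. The main obstacle I anticipate is precisely this disjointness bookkeeping together with justifying that $\Omega(f|_K)$ (and hence the domain of the applicable form of Theorem \ref{potencia}) is infinite; the latter is where the infinitude hypothesis on $K$ is consumed, via the classical fact that an expansive homeomorphism of a compact metric space with finite non-wandering set has finite phase space. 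Everything else is a transcription of the arguments already given for Theorems \ref{entrhs} and \ref{equiv}.
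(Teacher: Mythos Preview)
Your overall route—apply Theorem \ref{potencia} to extract a full $2$-shift, then transplant the proof of Theorem \ref{entrhs} (Grillenberger's uncountable family of positive-entropy minimal subshifts, pick one missing the countable set $A_{Sing}$, and invoke Theorem \ref{extraresult})—is exactly the ``reapplication of our techniques'' the paper has in mind; the paper omits the proof for this reason, so on that level your proposal matches perfectly.

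The genuine gap is precisely the step you yourself flag as the main obstacle. Theorem \ref{potencia} requires the system to be non-wandering, so you restrict to $\Omega(f|_K)$ and claim it is infinite, citing the ``classical fact that an expansive homeomorphism of a compact metric space with finite non-wandering set has finite phase space.'' That assertion is false even in the presence of shadowing. Take the subshift of $\{0,1\}^{\mathbb Z}$ obtained by forbidding the word $10$: it is a shift of finite type (hence expansive with shadowing), it is infinite (it contains all sequences $\dots 000111\dots$), yet its non-wandering set is the pair of fixed points $\{\overline 0,\overline 1\}$ and its entropy is zero. This SFT is topologically conjugate to $f|_K$ for $K=\{p,q\}\cup\mathcal O_f(x)$ where $f$ is a north–south $C^1$-diffeomorphism of $S^1$ with repeller $p$, attractor $q$, and $x\notin\{p,q\}$. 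Here $K$ is infinite, compact, $f$-invariant, $f|_K$ is expansive with shadowing, but $h(f)=0$; so not only does your argument break at this step, the statement as written appears to need an extra hypothesis such as $f|_K$ non-wandering (or $\Omega(f|_K)$ infinite). Under any such strengthening your proof goes through verbatim.
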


Since the proof of the previous result is just a reapplication of our techniques, we shall omit its proof to avoid unnecessary repetitions. To illustrate another advantage of considering only the topological dynamics of $f$ is that there is a concept of singular suspension of homeomorphisms (see \cite{Ko2} and \cite{Shi}). In the topological scenario, we lose the notion of a tangent vector field, but a suspension process can still be achieved. This process gives rise to a continuous singular flow with most of the properties of the smooth suspensions. Since the function, $\gamma$ in Subsection  \ref{singularsups} represents the first time in which a point on $(x,0)$ reaches $(f(x),0)$, then it can also be defined in the topological scenario. Thus, all the results in section \ref{characsection} also hold for continuous singular suspensions. In particular, Theorem  \ref{shadcor} also holds if $f$ is a homeomorphism.

\vspace{0.1in}


\vspace{0.1in}

\noindent

{\em E. Rego}
\vspace{0.2cm}

Department of Mathematics

Southern University of Science and Technology

Shenzhen -Guangdong, China 

\email{elias@im.ufrj.br}

\vspace{0.1in}

{\em S. Roma\~{n}a}
\vspace{0.2cm}

School of Mathematics (Zhuhai)

Sun Yat-sen University

519802, PR China

\email{sergio@mail.sysu.edu.cn}

\noindent

\vspace{0.2cm}

\end{document}